\newtheorem{theorem}{Theorem}
\newtheorem{proposition}[theorem]{Proposition}
\newtheorem{corollary}[theorem]{Corollary}
\newtheorem{example}{Example}
\newtheorem{remark}[theorem]{Remark}
\renewenvironment{proof}[1][Proof]{\textit{#1.} }
{\hfill \rule{0.5em}{0.5em}}
\begin{document}


\title{A geometric description of the extreme Khovanov cohomology}
\author{J. Gonz\'alez-Meneses, P. M. G. Manch\'on and M. Silvero \footnote{All authors partially supported by MTM2013-44233-P and FEDER.}}
\maketitle

\begin{abstract}
We prove that the hypothetical extreme Khovanov cohomology of a link is the cohomology of the independence simplicial complex of its Lando graph. We also provide a family of knots having as many non-trivial extreme Khovanov cohomology modules as desired, that is, examples of $H$-thick knots which are as far of being $H$-thin as desired.
\end{abstract}

\section{Introduction} \label{IntroductionSection}
The Khovanov cohomology of knots and links was introduced by Mikhail Khovanov at the end of last century (see~\cite{Khovanov}) and nicely explained by Bar-Natan in~\cite{Bar}. In~\cite{Viro} Viro interpreted it in terms of enhanced states of diagrams. Using Viro's point of view, in this paper we will prove that the hypothetical extreme Khovanov cohomology of a link coincides with the cohomology of the independence simplicial complex of its Lando graph.

The Lando graph (\cite{Lando}) of a link diagram was studied by Morton and Bae in~\cite{MortonBae}, where they proved that the hypothetical extreme coefficient of the Jones polynomial coincides with a certain numerical invariant of the graph, named in this paper as its independence number. Now, on one hand the Jones polynomial can be seen as the bigraded Euler characteristic of the Khovanov cohomology. On the other hand, the formula for the independence number certainly suggests the formula of an Euler characteristic. Both ideas together have led us to a way of understanding the extreme Khovanov cohomology in terms of this graph. This is what we develop in this paper and reflect in Theorem~\ref{KeyTheorem}.

In \cite{extreme} it was proved that the independence number can take any value, hence there are links (in fact knots) with arbitrary extreme coefficients. This idea is also extended here to Khovanov cohomology, by proving that there are links (in fact knots) with an arbitrary number of non-trivial extreme Khovanov cohomology modules (see Theorem~\ref{teothick}), which are of course examples of $H$-thick knots (see \cite{patterns}). The basic piece of these examples is a link with exactly two non-trivial extreme Khovanov cohomology modules, constructed in Theorem~\ref{Thexample}. However, we think that the construction of the example is more interesting than the example itself. The example is constructed using Theorem \ref{KeyTheorem}, the Alexander duality and a construction by Jonsson \cite{Jonsson}.

There are many other interesting constructions of graphs starting with a link diagram (see for example \cite{Dasbach}), not to be confused with the Lando graph. In addition, there are other very interesting ways of trying to understand the Khovanov cohomology as the cohomology of {\it something}. For example in \cite{LipshitzSarkar} Lipshitz and Sarkar construct, in an explicit and combinatorial way, a chain bicomplex that produces the Khovanov cohomology.

The paper is organized as follows: Section~\ref{KhovanovSection} reviews the definition of the Khovanov cohomology of links, by using the notion of enhanced state of a diagram, and precise what we understand by hypothetical  extreme Khovanov complex and cohomology modules. Section~\ref{LandoSection} reviews the notion of Lando graph of a link diagram, and defines the independence simplicial complex of this graph. We will refer to the corresponding cochain complex as the Lando cochain complex, and to the corresponding cohomology modules as the Lando cohomology. Indeed, we will note that the independence number of the Lando graph is the Euler characteristic associated to the Lando cohomology. In Section~\ref{EquivalenceSection} we prove that the Lando cochain complex is a copy of the hypothetical extreme Khovanov complex, with the degrees shifted by a constant, Theorem~\ref{KeyTheorem}. In this Section there is also an example showing how to apply this theorem in order to compute Khovanov extreme cohomology in a practical way. The rest of the paper deals with applications of Theorem~\ref{KeyTheorem}. In particular, Section~\ref{grafoacomplejosection} shows how to relate Lando cohomology (and extreme Khovanov cohomology) to the homology of a simplicial complex, which allows us to give an example of a link diagram having exactly two non-trivial extreme Khovanov cohomology modules. Finally in Section~\ref{ThickKnotsSection} we use the previous example in order to give families of $H$-thick knots.


\section{Khovanov cohomology} \label{KhovanovSection}

Let $D$ be an oriented diagram of a link $L$ with $c$ crossings and writhe $w = p - n$, with $p$ and $n$ being the number of positive and negative crossings in $D$, according to the sign convention shown in Figure \ref{marcadores}. A state $s$ assigns a label, $A$ or $B$, to each crossing of $D$. Let $\mathcal{S}$ be the collection of $2^c$ possible states of $D$. For $s \in \mathcal{S}$ assigning $a(s)$ $A$-labels and $b(s)$ $B$-labels, write $\sigma = \sigma(s) = a(s) - b(s)$. The result of smoothing each crossing of $D$ according to its label following Figure \ref{marcadores} is a collection $sD$ of disjoint circles embedded in the plane together with some $A$ and $B$-chords (segments joining the circles in the site where there was a crossing). We represent $A$-chords as dark segments, and $B$-chords as light ones. See Figure \ref{trebolsuavizado}. Enhance the state $s$ with a map $e$ which associates a sign $\epsilon_i = \pm 1$ to each of the $|s|$ circles in $sD$. Unless otherwise stated, we will keep the letter $s$ for the enhanced state $(s, e)$ to avoid cumbersome notation. Write $\tau = \tau (s) = \sum_{i=1}^{|s|} \epsilon_i$, and define, for the enhanced state $s$, the integers

$$
i = i(s)=\frac{w - \sigma}{2},     \quad      j = j(s) = w + i + \tau.
$$

\begin{figure}
\centering
\includegraphics[width = 10cm]{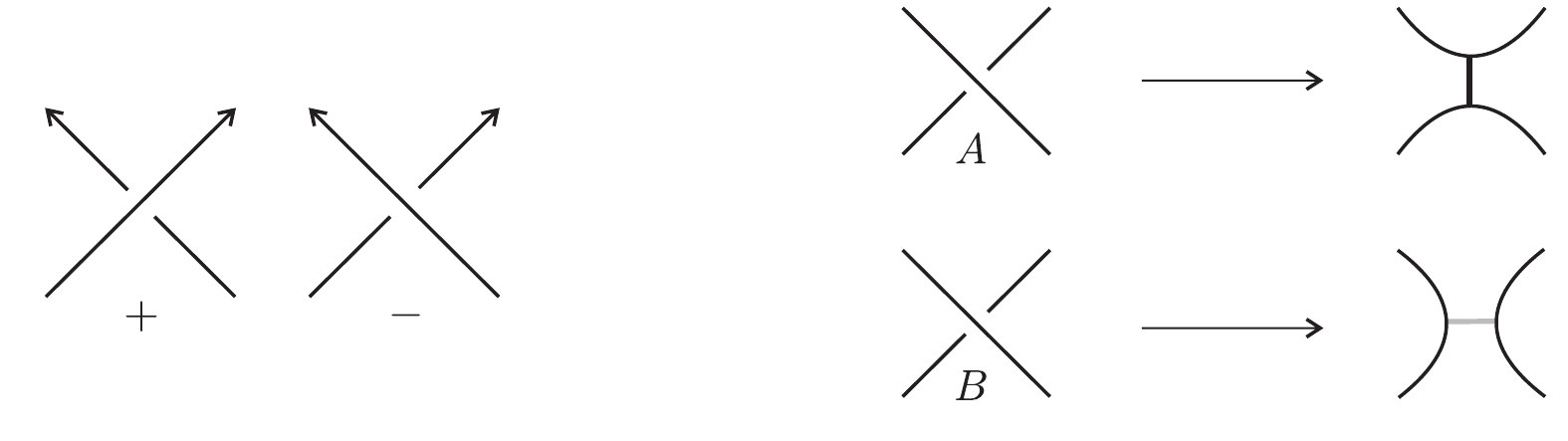}
\caption{\small{The leftmost part shows our convention of signs. The rightmost part shows the smoothing of a crossing according to its $A$ or $B$-label. $A$-chords ($B$-chords) are represented by dark (light) segments.}}
\label{marcadores}
\end{figure}

\begin{figure}
\centering
\includegraphics[width = 9.5cm]{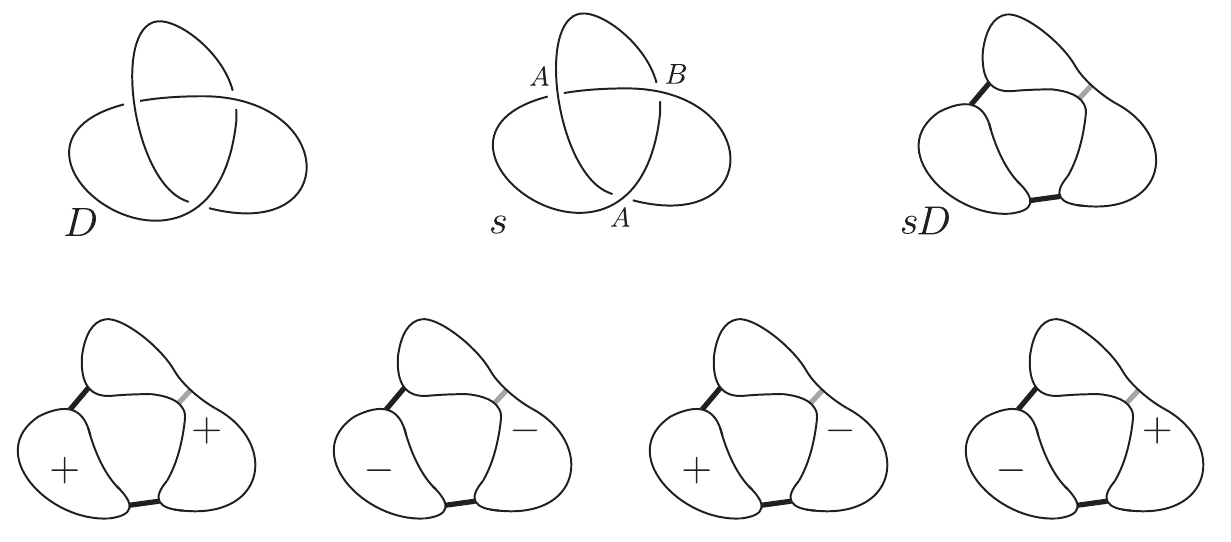}
\caption{\small{The first row shows a diagram $D$ representing $3_1$, a state $s$ and $sD$. Here $|s| = 2$. The four related enhanced states are shown in the second row. From left to right the values of $\tau$ are $2, -2, 0, 0$.}}
\label{trebolsuavizado}
\end{figure}

The enhanced state $t$ is adjacent to $s$ if the following conditions are satisfied:
\begin{enumerate}
\item $i(t) = i(s) + 1$ and $j(t) = j(s)$.
\item The labels assigned by $t$ are identical to those assigned by $s$ except in one (change) crossing $x = x(s,t)$, where $s$ assigns an $A$-label and $t$ a $B$-label.
\item The signs assigned to the common circles in $sD$ and $tD$ are equal.
\end{enumerate}

Note that the circles which are not common to $sD$ and $tD$ are those touching the crossing $x$. In fact, passing from $sD$ to $tD$ can be realized by melting two circles into one, or splitting one circle into two. The different possibilities according to the previous points are shown in Figure~\ref{posibilidades}.

\begin{figure}[h]
\centering
\includegraphics[width = 11cm]{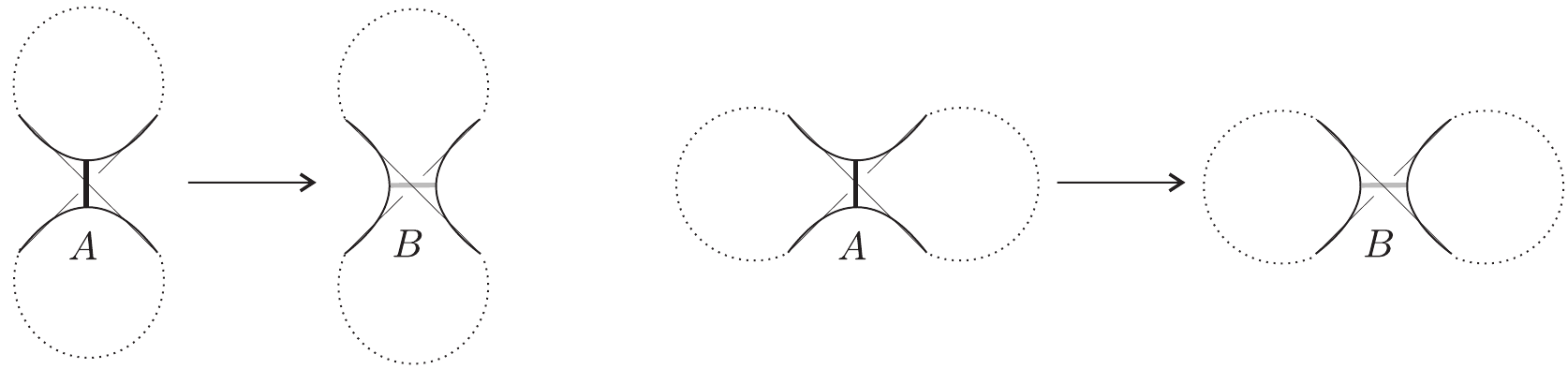}
\caption{\small{All possible enhancements when melting two circles are: $(++ \rightarrow +), \, (+- \rightarrow -), \, (-+ \rightarrow -)$. The possibilities for the splitting are: $(- \rightarrow --), \, (+ \rightarrow +-)$ or $(+ \rightarrow -+)$.}}
\label{posibilidades}
\end{figure}


Let $R$ be a commutative ring with unit. Let $C^{i,j}(D)$ be the free module over $R$ generated by the set of enhanced states $s$ of $D$ with $i(s) = i$ and $j(s) = j$. Order the crossings in $D$. Now fix an integer $j$ and consider the ascendant complex
$$
\ldots \quad \rightarrow \quad C^{i,j}(D) \quad \stackrel{d_i}{\longrightarrow} \quad C^{i+1,j}(D) \quad \longrightarrow \quad \ldots
$$
with differential $d_i(s) = \sum (s:t) t$, where $(s:t) = 0$ if $t$ is not adjacent to $s$ and otherwise $(s:t) = (-1)^k$, with $k$ being the number of $B$-labeled crossings coming after the change crossing $x$. It turns out that $d_{i+1} \circ d_i = 0$ and the corresponding cohomology modules over $R$
$$
H^{i,j}(D)=\frac{\textnormal{ker} (d_i)}{\textnormal{im}(d_{i-1})}
$$
are independent of the diagram $D$ representing the link $L$ and the ordering of its crossings, that is, these modules are link invariants. They are the Khovanov cohomology modules $H^{i,j}(L)$ of $L$ (\cite{Khovanov}, \cite{Bar}), as presented by Viro in \cite{Viro} in terms of enhanced states.

Let $j_{\min} = j_{\min}(D) = \min \{ j(s)\ / \ s \textnormal{ is a enhanced state of } D\}$. We will refer to the complex $\{ C^{i,j_{\min}}(D), d_i\}$ as the extreme Khovanov complex, and to the corresponding modules $H^{i,j_{\min}}(D)$ as the (hypothetical) extreme Khovanov modules. Indeed, there are analogous definitions for a certain $j_{\max}$.

We remark that the integers $j_{\min}$ and $j_{\max}$ depend on the diagram $D$, and may differ for two different diagrams representing the same link.


\section{Lando graph and its cohomology}\label{LandoSection}

Let $G$ be a graph. A set $\sigma$ of vertices of $G$ is said to be independent if no vertices in $\sigma$ are adjacent. The independence number of $G$ is defined to be
$$
I(G)=\sum_{\sigma} (-1)^{|\sigma |}
$$
where the sum is taken all over the independent sets of vertices of $G$. The empty set is considered as an independent set of vertices with $|\emptyset |=0$. A point has independence number 0, an hexagon 2.

Starting from a link diagram $D$ we recall the construction of its Lando graph (\cite{MortonBae}, \cite{extreme}). An $A$ or $B$-chord of $sD$ is admissible if its ends are both in the same circle of $sD$. Let $s_A$ be the state assigning $A$-labels to all the crossings of $D$. The Lando graph associated to $D$ is constructed from $s_AD$ by considering a vertex for every admissible $A$-chord, and an edge joining two vertices if the ends of the corresponding $A$-chords alternate in the same circle. Figure~\ref{ejemplolando} exhibits a diagram $D$, the corresponding $s_AD$ and its Lando graph $G_D$.

\begin{figure}
\centering
\includegraphics[width = 12cm]{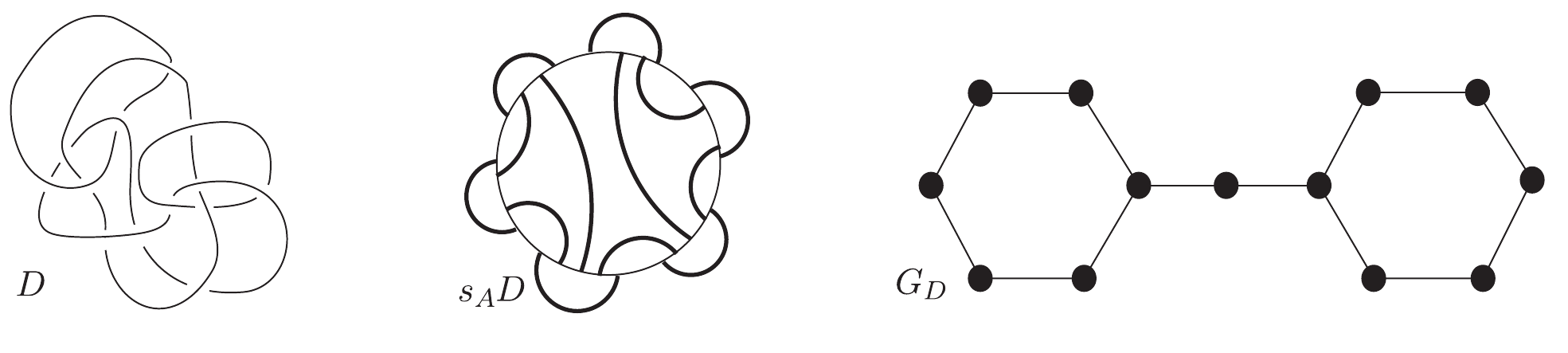}
\caption{\small{A diagram $D$ representing the trivial link with 4 components, $s_AD$ and the corresponding Lando graph $G_D$.}}
\label{ejemplolando}
\end{figure}

In \cite{MortonBae} it was proved that the coefficient of the hypothetical lowest degree monomial of the Jones polynomial of a link $L$ is, up to sign, the independence number $I(G_D)$ of the Lando graph $G_D$ of $D$, where $D$ is a diagram of $L$. In \cite{extreme} it was shown that $I(G_D)$ can take arbitrary values.

The Jones polynomial can be seen as the bigraded Euler characteristic of the Khovanov cohomology. The formula for the independence number suggests that each extreme coefficient of the Jones polynomial is the Euler characteristic of a certain cohomology given in terms of independent sets of vertices of the Lando graph. This idea has led us to a way of understanding the extreme Khovanov cohomology in terms of this graph.

Let $X_D$ be the independence simplicial complex of the graph $G_D$. By definition, the simplices $\sigma$ of $X_D$ are the independent subsets of vertices of $G_D$. Let $C^i(X_D)$ be the free module over $R$ generated by the simplices of dimension $i$, where the dimension of a simplex $\sigma$ is the number of its vertices minus one. Associated to this simplicial complex we have the (standard) cochain complex
$$
\ldots \rightarrow C^{i}(X_D) \, \stackrel{\delta_i}{\longrightarrow} \, C^{i+1}(X_D) \rightarrow \ldots
$$
with differential $\delta_i(\sigma )=\sum_v (-1)^{k} \sigma \cup \{ v\}$ where $v$ runs over the set of vertices of $G_D$ which are not adjacent to any vertex of $\sigma$, and $k = k(\sigma, v)$ is the number of vertices of $\sigma$ coming after $v$ in the predetermined order of the vertices of $G_D$. It turns out that $\delta_{i+1} \circ \delta_i=0$ and the corresponding reduced cohomology modules of $X_D$ are
$$
H^{i}(X_D)=\frac{\textnormal{ker} (\delta_i)}{\textnormal{im}(\delta_{i-1})}.
$$

We will refer to $\{ C^i(X_D), \delta_i \}$ as the Lando ascendant complex, and to $H^i(X_D)$ as the Lando cohomology modules. It is a direct algebraic exercise to check that, if $R$ is the field of the rational numbers, then $I(G_D)$ is the Euler characteristic associated to the cohomology of~$X_D$.


\section{Lando and extreme Khovanov cohomologies} \label{EquivalenceSection}

Let $D$ be an oriented link diagram and $s_A^-$ the enhanced state assigning an $A$-label to each crossing and the sign $-1$ to each circle of $s_AD$. Let
$$S_{\min} = \{ \textnormal{enhanced states $s$ } \, / \, |s| = |s_A| + b(s), \, \, \tau (s)= -|s| \}.$$

\begin{proposition}\label{PropositionSmin}
With the previous notation $j_{\min} = j(s_A^-)$ and $j(s) = j_{\min}$ if and only if $s \in S_{\min}$.
\end{proposition}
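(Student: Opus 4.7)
The plan is to isolate, in the formula $j(s) = w + i(s) + \tau(s)$, the two quantities that can be bounded independently: the number of circles $|s|$ in $sD$ and the number $b(s)$ of $B$-labels of $s$.

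First I would rewrite $j$ in a more convenient form. Since $a(s) + b(s) = c$ and $\sigma(s) = a(s) - b(s)$, we have $\sigma(s) = c - 2b(s)$, hence
\[
i(s) = \frac{w - \sigma(s)}{2} = \frac{w-c}{2} + b(s),
\]
so $j(s) = w + \frac{w-c}{2} + b(s) + \tau(s)$. The first two summands depend only on $D$, so minimising $j(s)$ is the same as minimising $b(s) + \tau(s)$.

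Next I would prove two separate inequalities. The first is immediate: since $\tau(s)$ is a sum of $|s|$ signs $\pm 1$, we have $\tau(s) \geq -|s|$, with equality if and only if every circle of $sD$ is labelled $-1$. The second inequality is the key geometric input, namely $|s| \leq |s_A| + b(s)$. To prove this I would pass from $s_A$ to $s$ by switching the $b(s)$ crossings whose labels differ; at each switch the local picture changes between the two resolutions shown in Figure~\ref{marcadores}, so the circle count changes by exactly $\pm 1$ (merging two circles or splitting one). Hence after $b(s)$ switches, $|s| - |s_A| \leq b(s)$, with equality exactly when each switch produces a split, i.e.\ when $|s| = |s_A| + b(s)$.

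Combining both inequalities gives $b(s) + \tau(s) \geq b(s) - |s| \geq -|s_A|$, and therefore
\[
j(s) \geq w + \frac{w-c}{2} - |s_A|.
\]
Finally I would check that the enhanced state $s_A^{-}$ attains this lower bound: $b(s_A^-) = 0$, $|s_A^-| = |s_A|$ and $\tau(s_A^-) = -|s_A|$, so equality holds throughout. This proves $j_{\min} = j(s_A^-)$. Since the combined bound is saturated precisely when both of the individual inequalities are equalities, $j(s) = j_{\min}$ if and only if $\tau(s) = -|s|$ and $|s| = |s_A| + b(s)$, which is exactly the condition defining $S_{\min}$.

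The only non-routine step is the circle-count inequality $|s| \leq |s_A| + b(s)$; everything else is bookkeeping. That step is genuinely local (a single crossing swap) and should be justified by appealing to the merge/split dichotomy already isolated in Figure~\ref{posibilidades}.
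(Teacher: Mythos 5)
Your proof is correct and follows essentially the same route as the paper: the paper likewise passes from $s_A^-$ to $s$ by switching one crossing at a time, using the merge/split dichotomy to see that $j$ never decreases along the way and is constant exactly when every switch splits a circle and all signs are $-1$. Your repackaging of this as the two inequalities $\tau(s) \geq -|s|$ and $|s| \leq |s_A| + b(s)$, combined through $j(s) = w + \tfrac{w-c}{2} + b(s) + \tau(s)$, is just a tidier bookkeeping of the identical key input.
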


\begin{proof}
Recall that $j(s) = \frac{3w - \sigma}{2} + \tau$ with $\sigma = a(s) - b(s)$ and $\tau (s) = \sum_{i=1}^{|s|} \epsilon_i$, where $\epsilon_i$ is the sign (+1 or -1) associated to the circle $c_i$ in $sD$.

Given a diagram $D$ let $s$ be an enhanced state associating a positive sign to at least one of the circles in $sD$. Then $j(s) \neq j_{\min}$, as the state given by associating negative signs to every circle in $sD$ has a smaller value of $j$. Hence, all states $s$ realizing $j_{\min}$ assign $-1$ to their circles, or equivalently $\tau (s)= -|s|$ (the second condition in the definition of $S_{\min}$).

Now identify a state with the set of crossings of $D$ where the state assigns a $B$-label. Let $s = \{ y_1, \dots , y_b\}$ be an enhanced state assigning $b = b(s)$ $B$-labels (at the crossings $y_1, \dots, y_b$) and negative signs to all its circles. Consider the sequence of enhanced states
$$
s_0=s_A^-, s_1, \dots , s_b = s
$$
where $s_k = \{y_1, \ldots, y_k \}$ for $k = 1, \dots , b$, and all circles in $s_kD$ have sign $-1$.

Since $w$ is invariant and $\sigma (s_k) = \sigma (s_{k-1}) - 2$, there are two possibilities: if $|s_k|= |s_{k-1}| + 1$, then $\tau(s_k) = \tau(s_{k-1}) - 1$ and $j(s_k) = j(s_{k-1})$; if $|s_k|= |s_{k-1}| - 1$, then $\tau(s_k) = \tau(s_{k-1}) + 1$ and  $j(s_k) = j(s_{k-1}) + 2$. Note that this is independent of the ordering of the crossings.

A first consequence is that $j(s_A^-) \leq j(s)$, where $s$ was taken to be any state assigning $-1$ to all circles in $sD$, so $j(s_A^-) = j_{\min}$. A second consequence is that $j(s) = j_{\min}$ if and only if $|s_k| = |s_{k-1}| + 1$ for each $k \in \{1, \dots , b\}$, that is, if and only if $s \in S_{\min}$.
\end{proof}

There are analogous $s_B^{+}$, $j_{\max}$ and $S_{max}$, with $j(s_B^+) = j_{\max}$ and $s \in S_{max}$ if and only if $j(s) = j_{\max}$.

\begin{corollary}\label{CorollaryJmin}
Fix an oriented link diagram $D$ with $c$ crossings, $n$ negative and $p$ positive. Then $j_{\min}=c - 3n - |s_A|$ and $j_{\max} = - c + 3p + |s_B|$.
\end{corollary}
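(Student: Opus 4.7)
The plan is to reduce everything to Proposition~\ref{PropositionSmin}, which already pins down the enhanced states realizing the extreme values of $j$, and then simply evaluate $j$ on those specific states. No combinatorial work is needed beyond keeping track of the definitions; the obstacle is really just careful bookkeeping with the signs and the relation between $c$, $p$, $n$ and $w$.

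Concretely, I would start from the formula $j(s)=w+i(s)+\tau(s)=\tfrac{3w-\sigma}{2}+\tau$ appearing in the proof of Proposition~\ref{PropositionSmin}. By that proposition, $j_{\min}=j(s_A^-)$. The state $s_A^-$ assigns an $A$-label to every one of the $c$ crossings, so $a(s_A^-)=c$, $b(s_A^-)=0$, and hence $\sigma(s_A^-)=c$; since every one of the $|s_A|$ circles in $s_AD$ is enhanced with sign $-1$, we have $\tau(s_A^-)=-|s_A|$. Plugging these values in and using $w=p-n$ and $c=p+n$, I get
\[
j_{\min}=\frac{3(p-n)-c}{2}-|s_A|=\frac{2p-4n}{2}-|s_A|=(p-2n)-|s_A|=c-3n-|s_A|,
\]
which is the first equality.

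For the second equality I would repeat the argument with the analogous state $s_B^+$, which assigns a $B$-label to every crossing and the sign $+1$ to each of the $|s_B|$ circles of $s_BD$. Then $\sigma(s_B^+)=-c$ and $\tau(s_B^+)=|s_B|$, and the analog of Proposition~\ref{PropositionSmin} (mentioned immediately after its proof) gives $j_{\max}=j(s_B^+)$. Substituting,
\[
j_{\max}=\frac{3(p-n)+c}{2}+|s_B|=\frac{4p-2n}{2}+|s_B|=(2p-n)+|s_B|=-c+3p+|s_B|.
\]

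The only thing that could go wrong is a sign slip in the identification $\sigma=a-b$ or in rewriting $c=p+n$, so I would double-check those substitutions, but otherwise the corollary is a direct algebraic consequence of Proposition~\ref{PropositionSmin} together with the defining formulas for $i$, $j$, $\sigma$ and $\tau$.
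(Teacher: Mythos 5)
Your proof is correct and follows essentially the same route as the paper: both invoke Proposition~\ref{PropositionSmin} to reduce to evaluating $j$ at $s_A^-$ (and $s_B^+$), then carry out the arithmetic with $w=p-n$, $c=p+n$, $\sigma$ and $\tau$. The only cosmetic difference is that the paper passes through the intermediate identity $i(s)=b(s)-n$ while you substitute directly into $j=\tfrac{3w-\sigma}{2}+\tau$, and you write out the $j_{\max}$ case explicitly where the paper says it is analogous.
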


\begin{proof}
Since $w = p - n = c - 2n$ and $\sigma(s) = c - 2b(s)$ we deduce that $i(s) = b(s) - n$. In particular $i(s_A) = - n$. It follows that

$$
\begin{array}{rcl}
j_{\min} & = & j(s_A^-) \\
 & = & w + i(s_A^-) + \tau(s_A^-) \\
 & = & (c - 2n) - n - |s_A| \\
 & = & c - 3n - |s_A|.
\end{array}
$$

A similar argument works for $j_{\max}$ using $s_B^+$ instead of $s_A^-$.
\end{proof}

Recall that the vertices in the Lando graph of $D$, $G_D$, are associated to the admissible $A$-chords in $s_AD$ (the ones having both ends in the same circle of $s_AD$). Let $V_s$ be the set of vertices of $G_D$ corresponding to the crossings of $D$ to which $s$ associates a $B$-label. Note that $V_s$ can have less than $b(s)$ vertices, or even be empty.

\begin{proposition}\label{PropositionCs}
The map that assigns $V_s$ to each enhanced state $s$ defines a bijection between $S_{min}$ and the set of independent sets of vertices of $G_D$. Moreover, if $s \in S_{min}$ then the cardinal of $V_s$ is exactly $b(s)$.
\end{proposition}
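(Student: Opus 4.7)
My approach is to exploit the characterization of $S_{\min}$ coming from Proposition~\ref{PropositionSmin}: the condition $|s|=|s_A|+b(s)$ is equivalent to saying that, in the sequence of enhanced states $s_A=s_0,s_1,\ldots,s_b=s$ obtained by switching $B$-labels one crossing at a time (in any order), every step strictly increases the circle count. Geometrically, a single $A$-to-$B$ switch at a crossing $y$ increases $|s|$ by one when the chord at $y$ has both endpoints on the same circle of the current state (a \emph{splitting}) and decreases it by one otherwise (a \emph{merging}). Since $\tau(s)=-|s|$ forces every circle of $sD$ to carry the sign $-1$, an element of $S_{\min}$ is determined by its set of $B$-labeled crossings, and the question reduces to showing that such sets correspond bijectively to independent sets of $G_D$.

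I would prove this by establishing that every ordering of the $B$-switches yields only splittings if and only if the $B$-labeled crossings correspond to admissible, pairwise non-alternating chords of $s_AD$. The forward direction uses the freedom to reorder: a non-admissible $B$-chord of $s_AD$ merges when switched first, forcing $|V_s|<b(s)$ and $s\notin S_{\min}$; two admissible $B$-chords that alternate on some circle of $s_AD$ produce a split followed by a merge when switched consecutively, again contradicting $s\in S_{\min}$. The converse is the geometric heart of the argument: given a non-crossing admissible family of $B$-chords, I would induct on the number of surgeries already performed, showing that every remaining $B$-chord $v$ still has both endpoints on a single circle. The key observation is that on the original circle $C$ of $s_AD$ containing $v$, the other $B$-chords on $C$ form a non-crossing family with $v$, so each such chord either sits on the same side of the previously surgered chords as $v$ (and inherited non-alternation keeps $v$'s endpoints together after its surgery) or on the opposite side (so the surgery does not touch $v$'s current circle).

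From this equivalence the bijection is immediate: for any independent set $V$ of vertices of $G_D$ the state $s_V$ with $B$-labels exactly at the crossings indexed by $V$, $A$-labels elsewhere, and sign $-1$ on every circle lies in $S_{\min}$ by the converse direction, and $V_{s_V}=V$; conversely $s_{V_s}=s$ for $s\in S_{\min}$ by the forward direction, which also yields $|V_s|=b(s)$. The main obstacle I expect is making the inductive geometric argument about non-crossing chord diagrams clean, in particular carefully tracking how the original circle $C$ breaks into smaller circles in the intermediate partial smoothings.
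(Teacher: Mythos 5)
Your proposal is correct and follows essentially the same route as the paper: characterize $S_{\min}$ by the requirement that every $A$-to-$B$ switch is a splitting (independently of the ordering of the crossings), obtain admissibility and pairwise non-alternation of the $B$-chords by reordering so that an offending chord or pair comes first, and conversely verify that an independent set produces only splittings, giving the bijection and $|V_s|=b(s)$. The only difference is one of detail: the paper asserts in one line that independence of the chosen set yields $|s|=|s_A|+b(s)$, whereas you spell out the non-crossing-chord induction that justifies this step.
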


\begin{proof}
Let $s = \{ y_1, \dots , y_b\}$ be an enhanced state in $S_{\min}$ with $b = b(s)$ $B$-labels (at the crossings $y_1, \dots , y_b$). Consider the sequence of enhanced states
$$
s_0=s_A^-, s_1, \dots , s_b=s
$$
where $s_k = \{y_1, \ldots, y_k \}$ for $k = 1, \dots , b$, and all circles in $s_kD$ have sign $-1$. As $s \in S_{min}$, according to the proof of Proposition~\ref{PropositionSmin} $|s_{k}| = |s_{k-1}| + 1$ for each $k \in \{1, \dots , b\}$, or equivalently, one passes from $s_{k-1}D$ to $s_kD$ by splitting one circle into two circles.

Note that the $A$-chord of $s_AD$ corresponding to the crossing $y_1$ of $D$ is admissible, since otherwise $|s_1| = |s_0|-1$ (see Figure~\ref{prueba1}). As the construction in the previous sequence does not depend on the order of the crossings, it follows that any $A$-chord of $s_AD$ corresponding to a crossing $y_i$ is admissible in $s_AD$, so $G_D$ contains its associated vertex.

\begin{figure}
\centering
\includegraphics[width = 10cm]{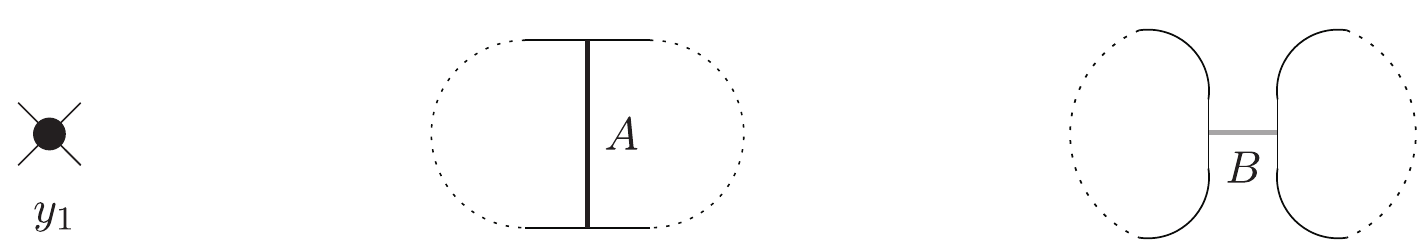}
\caption{\small{The vertex $y_1$ corresponds to a splitting from $s_AD = s_0D$ to $s_1D$.}}
\label{prueba1}
\end{figure}

Moreover there is no pair of $A$-chords in $s_AD$ corresponding to $B$-labels of $s$ with their ends alternating in the same circle, since otherwise two $B$-smoothings in these two crossings would not increase the number of circles by two, as Figure~\ref{prueba2} shows schematically. This implies that the corresponding vertices in $V_s$ are independent.

\begin{figure}
\centering
\includegraphics[width = 10cm]{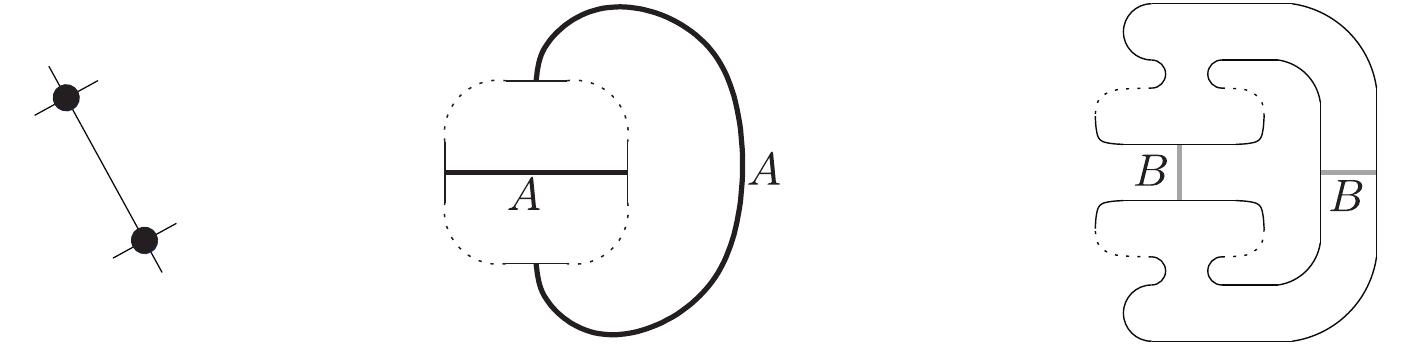}
\caption{\small{Adjacent vertices in $G_D$ correspond to $|s_A| = |s|$ when smoothing.}}
\label{prueba2}
\end{figure}

Conversely, if $C$ is an independent set of vertices of $G_D$, consider the state $s$ that assigns $B$-labels exactly in the corresponding crossings. In particular $b(s) = |C|$. Enhance this state assigning $-1$ to each circle of $sD$. Since $C$ is independent, $|s| = |s_A| + b(s)$, hence $s \in S_{min}$ as we wanted to show.
\end{proof}

The extreme Khovanov cohomology is constructed, according to Proposition~\ref{PropositionSmin}, in terms of the states in $S_{min}$. For these states the definition of adjacency given in Section~\ref{KhovanovSection} is reduced to the second condition given there. Namely, if $s, t \in S_{min}$ then $t$ is adjacent to $s$ if and only if $t$ assigns the same labels as $s$ except in one crossing $x$, where $s(x)$ and $t(x)$ are an $A$-label and a $B$-label, respectively. Now we are ready to establish the main result in this paper:

\begin{theorem}\label{KeyTheorem}
Let $L$ be an oriented link represented by a diagram $D$ having $n$ negative crossings. Let $G_D$ be the Lando graph of $D$ and let $j=j_{\min}(D)$. Then the Lando ascendant complex $\{C^i(X_D), \delta_i\}$ is a copy of the extreme Khovanov complex $\{C^{i,j}(D), d_i\}$, shifted by $n - 1$. Hence
$$
H^{i,j}(D)\approx H^{i-1+n}(X_D).
$$
\end{theorem}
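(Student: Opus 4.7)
The plan is to construct a degree-shifted isomorphism of cochain complexes from the bijection already produced by Proposition~\ref{PropositionCs}, and then verify that the Khovanov differential corresponds to the simplicial coboundary under a consistent ordering.

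First I would unpack the degree shift. By Proposition~\ref{PropositionSmin} the generators of $C^{i,j_{\min}}(D)$ are exactly the enhanced states $s\in S_{\min}$ with $i(s)=i$, and from the proof of Corollary~\ref{CorollaryJmin} we have $i(s)=b(s)-n$. On the Lando side a simplex $\sigma$ of dimension $d$ is an independent set of $d+1$ vertices of $G_D$. Proposition~\ref{PropositionCs} sends $s\in S_{\min}$ to an independent set $V_s$ with $|V_s|=b(s)$, so the dimension of the corresponding simplex is $b(s)-1=(i(s)-n)+(n-1)+1-1 = i(s)+n-1$. This gives the required shift: an $R$-module isomorphism $\varphi\colon C^{i,j_{\min}}(D)\to C^{i+n-1}(X_D)$ for every $i$.

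Next I would show that $\varphi$ intertwines the differentials. If $s,t\in S_{\min}$ are adjacent in the Khovanov sense, then $t$ differs from $s$ only at a change crossing $x$ where the label flips from $A$ to $B$, and all circles receive sign $-1$, so $V_t=V_s\cup\{v_x\}$ where $v_x$ is the vertex of $G_D$ corresponding to $x$. Here I must verify that $v_x$ is actually independent of $V_s$: this is the converse half of Proposition~\ref{PropositionCs}, since $V_t$ itself must be an independent set. Conversely, if $\sigma$ is an independent simplex and $v\notin\sigma$ is a vertex not adjacent to any vertex of $\sigma$, then $\sigma\cup\{v\}$ is again an independent set, and Proposition~\ref{PropositionCs} assures us that the associated enhanced state $t$ (with all $-1$ signs) lies in $S_{\min}$ and is Khovanov-adjacent to the state $s$ associated with $\sigma$. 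Thus adjacency on $S_{\min}$ matches the covering relation on independent sets.

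The main remaining step, and the only mildly delicate one, is sign compatibility. Order the crossings of $D$; this induces an ordering on the vertices of $G_D$ via Proposition~\ref{PropositionCs}. For an adjacency pair $(s,t)$ with change crossing $x$, the Khovanov sign $(s{:}t)=(-1)^k$ counts the $B$-labeled crossings of $s$ that come after $x$; under $\varphi$ this is exactly the number of vertices of $V_s$ coming after $v_x$, which is the sign appearing in $\delta$. Hence $\varphi\circ d_i = \delta_{i+n-1}\circ\varphi$, so $\varphi$ is a cochain isomorphism and induces the claimed isomorphism $H^{i,j_{\min}}(D)\cong H^{i-1+n}(X_D)$. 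I expect the matching of adjacency to be the conceptually subtle point, while the sign bookkeeping is routine once a compatible ordering is fixed.
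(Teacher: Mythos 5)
Your proposal is correct and follows essentially the same route as the paper: the bijection of Proposition~\ref{PropositionCs} together with $i(s)=b(s)-n$ gives the shifted module isomorphism, adjacency in $S_{\min}$ is matched with adding a non-adjacent vertex to an independent set, and the signs agree once the vertices of $G_D$ are ordered by the crossings. (Only note the harmless arithmetic slip in the intermediate expression $(i(s)-n)+(n-1)+1-1$; since $i(s)=b(s)-n$ one has directly $b(s)-1=i(s)+n-1$, which is the shift you and the paper both use.)
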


\begin{proof}
According to Proposition~\ref{PropositionSmin}, the extreme Khovanov cohomology is constructed with the states in $S_{min}$. Suppose that $s \in S_{min}$ and let $V_s$ be the corresponding independent set of vertices of $G_D$. Since $i(s) = b(s) - n$ and $\dim (V_s) = |V_s| - 1 = b(s) -1$, the bijection between $S_{min}$ and the set of independent sets of vertices of $G_D$ established in Proposition~\ref{PropositionCs} provides an isomorphism
$$
C^{i,j}(D) \approx C^{i-1+n}(X_D).
$$

One just needs to show that this isomorphism respects the differential of both complexes (technically, that the assignment $s$ to $V_s$ defines a chain isomorphism). Recall that for two states $s, t \in S_{min}$, $t$ is adjacent to $s$ if and only if $t$ assigns the same labels as $s$ except in one crossing $x$, where $s(x)$ and $t(x)$ are an $A$-label and a $B$-label, respectively. Moreover, in this case only a splitting is possible at the change crossing $x$ when passing from $sD$ to $tD$, since the degree $j_{\min}$ is preserved and the degree~$i$ is increased by one, $\tau (s) = -|s|$ and $\tau (t) = -|t|$. It follows that $V_t = V_s \cup \{v_x\}$ where $v_x$ is the vertex in $G_D$ corresponding to $x$.

In addition, if we order the vertices of $G_D$ according to the order of the crossings in $D$ (hence the assignment ``vertex to crossing'' is an increasing map), we get that the number of $B$-labeled crossings of $D$ coming after the crossing $x$, is exactly the number of vertices of $V_s$ coming after the vertex $v_x$.
\end{proof}

\begin{example} \label{Example}

Consider the oriented link $L$ represented by the oriented diagram $D$ shown in the leftmost part of Figure~\ref{ejhexagon}. The Lando graph $G_D$ is the hexagon~ shown at the right hand side of Figure \ref{ejhexagon}. Number its vertices consecutively, from $1$ to~$6$. Then $C^{-1}(X_D)$, $C^{0}(X_D)$, $C^{1}(X_D)$ and $C^{2}(X_D)$ have ranks $1$, $6$, $9$ and $2$ respectively, with respective basis
$$\{ \emptyset\}, \{1, 2, 3, 4, 5, 6 \}, \{13, 14, 15, 24, 25, 26, 35, 36, 46\}, \{135, 246\},$$
the other modules being trivial (note that we write, for example, $135$ instead of $\{ 1, 3, 5\}$). The Lando ascendant complex is

$$
0 \, \longrightarrow \, C^{-1}(X_D) \, \stackrel{\delta_{-1}}{\longrightarrow} \, C^{0}(X_D) \, \stackrel{\delta_0}{\longrightarrow} \, C^{1}(X_D) \, \stackrel{\delta_1}{\longrightarrow} \, C^{2}(X_D) \, \longrightarrow \, 0,
$$

with differentials $\delta_{-1}, \, \delta_{0}$ and $\delta_{1}$ given respectively by the matrices

$$
\left( \begin{array}{c} 1 \\ 1 \\ 1 \\ 1 \\ 1 \\ 1 \end{array} \right),
\left( \begin{array}{cccccc}
1 & 0 & -1 & 0 & 0 & 0 \\
1 & 0 & 0 & -1 & 0 & 0 \\
1 & 0 & 0 & 0 & -1 & 0 \\
0 & 1 & 0 & -1 & 0 & 0 \\
0 & 1 & 0 & 0 & -1 & 0 \\
0 & 1 & 0 & 0 & 0 & -1 \\
0 & 0 & 1 & 0 & -1 & 0 \\
0 & 0 & 1 & 0 & 0 & -1 \\
0 & 0 & 0 & 1 & 0 & -1
\end{array} \right),
\left( \begin{array}{ccccccccc} 1 & 0 & -1 & 0 & 0 & 0 & 1 & 0 & 0 \\
0 & 0 & 0 & 1 & 0 & -1 & 0 & 0 & 1 \end{array}\right).
$$

Let $R$ be the field of rational numbers. The ranks of these matrices are $1$, $5$ and $2$ respectively. In particular $H^{1}(X_D)$ has dimension two as a rational vector space, being trivial the rest of Lando cohomology vector spaces.

Orient now the three components of $D$ in a counterclockwise sense. By Corollary~\ref{CorollaryJmin}, $j_{\min} = c - 3n - |s_A| = 6 - 3 \cdot 6 - 1 = -13$ and, by Theorem \ref{KeyTheorem}, the complexes are shifted by $n - 1 = 5$, hence $H^{-4,-13}(L) \approx H^1(X_D)$ is two-dimensional, being trivial the rest of extreme Khovanov cohomology vector spaces.

This example shows that, in general, for different orientations of the components of a link, we obtain the same extreme Khovanov cohomology modules ($j_{\min}$ may change), with some shifting in the index $i$.

\begin{figure}
\centering
\includegraphics[width = 10.5cm]{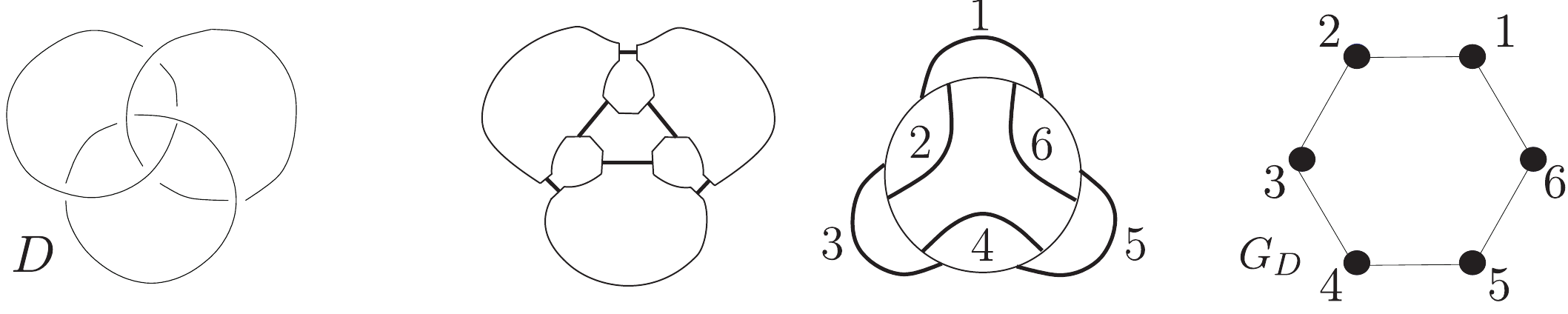}
\caption{\small{A link diagram $D$, two versions of $s_AD$ and its associated Lando graph $G_D$.}}
\label{ejhexagon}
\end{figure}

\end{example}


A complete bipartite graph $K_{r,s}$ is a graph whose vertices can be divided into two disjoint sets $V$ and $W$, having $r$ and $s$ vertices respectively, such that every edge connects a vertex in $V$ to one in $W$ and every pair of vertices $v \in V$ and $w \in W$ are connected by an edge.

\begin{corollary}\label{CorollaryKrs}
Let $L$ be an oriented link represented by a diagram $D$ having $n$ negative crossings. Let $G_D$ be the Lando graph of $D$ and let $j=j_{\min}(D)$. Then $H^{1-n,j}(L) \approx~R$ if $G_D$ is the complete bipartite graph $K_{r,s}$ and it is trivial otherwise.
\end{corollary}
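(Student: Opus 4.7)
The plan is to apply Theorem~\ref{KeyTheorem} with $i=1-n$ to obtain the isomorphism $H^{1-n,j}(L)\cong H^0(X_D)$, and then identify this reduced degree-zero cohomology with purely graph-theoretic data of $\overline{G_D}$.

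The first key observation is that the $1$-skeleton of the independence complex $X_D$ is precisely the complement graph $\overline{G_D}$: an unordered pair $\{u,v\}$ is a $1$-simplex of $X_D$ iff $\{u,v\}$ is independent in $G_D$, iff $u$ and $v$ are adjacent in $\overline{G_D}$. Looking at the differentials of Section~\ref{LandoSection}, $\ker\delta_0$ consists of vertex-functions that take the same value on the two endpoints of every non-edge of $G_D$, i.e.\ functions that are constant on each connected component of $\overline{G_D}$, while $\operatorname{im}\delta_{-1}$ is the line spanned by the globally constant function. Consequently $H^0(X_D)\cong R^{c-1}$, where $c$ is the number of connected components of $\overline{G_D}$.

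If $G_D=K_{r,s}$ with $r,s\ge1$, then an independent subset of $G_D$ is exactly a subset of one of the two parts, so $X_D=\Delta^{r-1}\sqcup\Delta^{s-1}$ and equivalently $\overline{G_D}=K_r\sqcup K_s$ has $c=2$ components, giving $H^0(X_D)\cong R$ and hence $H^{1-n,j}(L)\cong R$. In the remaining case one must show that $c=1$, i.e.\ that $\overline{G_D}$ is connected whenever $G_D$ is not complete bipartite. A disconnection of $\overline{G_D}$ produces a join decomposition $V(G_D)=V_1\sqcup V_2$ with every cross-pair an edge of $G_D$; since admissible chords on different circles of $s_AD$ are automatically non-adjacent in $G_D$, such a decomposition forces all the admissible chords in $V_1\cup V_2$ to lie on a single circle of $s_AD$. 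The planar chord-diagram structure of that circle is then used to rule out any interleaving pair within $V_i$, so that each $V_i$ is an independent set of $G_D$ and $G_D=K_{|V_1|,|V_2|}$.

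The first two paragraphs are essentially bookkeeping on top of Theorem~\ref{KeyTheorem}; the main obstacle is the last step in the third paragraph, namely the planar/combinatorial input that rules out interleaving pairs inside $V_i$. This is the only place where the fact that $G_D$ arises from an honest link diagram (and not an arbitrary chord diagram) really enters, and formulating it as a clean statement about chord diagrams on a single circle is where the genuine work lies.
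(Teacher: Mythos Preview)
Your approach is essentially the same as the paper's: reduce via Theorem~\ref{KeyTheorem} to $H^{0}(X_D)$, identify the $1$-skeleton of $X_D$ with the complement graph $\overline{G_D}$, and compute $H^{0}(X_D)\cong R^{c-1}$ where $c$ is the number of connected components of $\overline{G_D}$. The paper then simply invokes the fact that \emph{every Lando graph is bipartite} (via the chessboard colouring of the regions of $s_AD$: each admissible $A$-chord lies in a black or a white region, and two non-crossing chords whose endpoints alternate on the same circle must lie in regions of opposite colour). Once bipartiteness is known, each colour class is a clique in $\overline{G_D}$, so $\overline{G_D}$ has at most two components, with equality precisely when there is no $\overline{G_D}$-edge between the colour classes, i.e.\ when $G_D=K_{r,s}$.

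Your third paragraph reaches the same conclusion by a more roundabout route: you first reduce to a single circle (correct), and then appeal to ``the planar chord-diagram structure'' to rule out an interleaving pair inside a $V_i$. That argument can be made to work, but when you unwind it you are exactly reproving bipartiteness: two non-crossing chords on one circle whose endpoints alternate must lie on opposite sides of the circle, so three pairwise-interleaving non-crossing chords are impossible, and hence no $V_i$ can contain a $G_D$-edge. So the step you flag as ``where the genuine work lies'' is in fact the one-line observation that the Lando graph is bipartite; you are not missing an idea, only overestimating the difficulty of the final step.
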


\begin{proof}
According to Theorem~\ref{KeyTheorem} we just have to prove that $H^0(X_D)\approx~R$ if $G_D$ is $K_{r,s}$, and it is trivial otherwise. Let $G_D^c$ be the complementary graph of $G_D$. A remarkable observation is that any Lando graph is always a bipartite graph, and $G_D=K_{r,s}$ if and only if $G_D^c$ has exactly two connected components; otherwise $G_D^c$ is connected. The key observation is now that the connected components of $G_D^c$ coincide exactly with the elements of a basis of ker$(\delta_0)$. The fact that $\delta_{-1}(\emptyset)=1+2+\dots + c \in C^0(X_D)$ completes the argument.
\end{proof}


\section{Lando cohomology as homology of a simplicial complex}\label{grafoacomplejosection}

In this section we show how to construct a simplicial complex whose homology is equal to the cohomology of the independence complex of a Lando graph $G_D$ up to some shifting. This fact together with Theorem \ref{KeyTheorem} implies that the homology of the simplicial complex determines the extreme Khovanov cohomology of the link represented by $D$.

A key point is the following result by Jonsson \cite{Jonsson}:

\begin{theorem}\emph{\cite[Theorem 3.1]{Jonsson}} \label{jonssongac}
Let $G$ be a bipartite graph with nonempty parts $V$ and $W$. Then there exists a simplicial complex $X_{G,V}$ whose suspension is homotopy equivalent to the independence complex of $G$.
\end{theorem}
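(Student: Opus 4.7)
The plan is to exhibit $X_{G,V}$ as the intersection of two explicit contractible subcomplexes that together cover $I(G)$, and then invoke the standard gluing principle to obtain the suspension equivalence. Concretely, set
\[
\tilde C_V := \bigcup_{v\in V}\mathrm{star}_{I(G)}(v) = \{\sigma\in I(G)\,:\,V\setminus N(\sigma)\neq\emptyset\},
\]
where $\mathrm{star}_{I(G)}(v)$ denotes the closed star of $v$ in $I(G)$ and $N(\sigma)$ the open neighbourhood of $\sigma$ in $G$. Define $\tilde C_W$ symmetrically and take
\[
X_{G,V} := \tilde C_V \cap \tilde C_W = \{\sigma\in I(G)\,:\,V\setminus N(\sigma)\neq\emptyset\text{ and }W\setminus N(\sigma)\neq\emptyset\}.
\]

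Three observations then assemble the proof. First, $\tilde C_V\cup\tilde C_W=I(G)$: any non-empty $\sigma\in I(G)$ contains some vertex $u\in V\cup W$, and independence of $\sigma$ forces $u\notin N(\sigma)$, placing $\sigma$ in $\tilde C_V$ or $\tilde C_W$ accordingly, while the empty simplex lies in both. Second, $\tilde C_V$ is contractible, by the nerve lemma applied to the cover $\{\mathrm{star}(v)\}_{v\in V}$: each closed star is a cone, and every finite intersection $\bigcap_i\mathrm{star}(v_i) = \mathrm{star}(\{v_1,\dots,v_k\})$ is a nonempty cone (nonempty because $\{v_1,\dots,v_k\}\subseteq V$ is independent in $G$ and hence a simplex of $I(G)$); the nerve is therefore the full simplex $\Delta(V)$, which is contractible. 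Symmetrically $\tilde C_W$ is contractible. Third, subcomplex inclusions into a CW complex are cofibrations, so the classical gluing lemma
\[
A\cup_C B \simeq \Sigma C \quad\text{when $A,B$ are contractible}
\]
applied to $A=\tilde C_V$, $B=\tilde C_W$, $C=X_{G,V}$ yields $I(G)\simeq \Sigma X_{G,V}$.

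The proof hinges on the bipartite hypothesis: independence of $V$ is precisely what guarantees that every finite subset $\{v_1,\dots,v_k\}\subseteq V$ is a simplex of $I(G)$, so that the intersections in the nerve lemma are nonempty. The one genuinely technical point — a routine check — is the identification $\bigcap_i\mathrm{star}(v_i)=\mathrm{star}(\{v_1,\dots,v_k\})$ together with the fact that this closed star is the full simplex on $\{u\in V\cup W : \{u,v_1,\dots,v_k\}\in I(G)\}$, and hence a cone. I expect no real obstacles beyond this verification: the bipartite structure is tailored precisely so that the $V$-side and $W$-side give contractible covers via a symmetric nerve argument, and the gluing lemma does the rest.
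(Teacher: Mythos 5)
Your argument is essentially correct, but note first that the paper itself contains no proof of this statement: it is quoted from Jonsson, and the paper only records his construction of $X_{G,V}$ (the faces are the subsets $\sigma\subseteq V$ admitting a witness $w\in W$ with $\sigma\cup\{w\}$ independent). Measured against that, your proof is a genuinely different route, and it produces a different complex. The cover $I(G)=\tilde C_V\cup\tilde C_W$, the nerve-lemma step (where bipartiteness gives $\bigcap_i\mathrm{star}(v_i)=\mathrm{star}(\{v_1,\dots,v_k\})$, a nonempty cone, so the nerve is the full simplex on $V$), and the gluing lemma for contractible subcomplexes are all fine --- including the use of $V,W\neq\emptyset$ to place the empty face in both pieces --- so you do prove that \emph{some} complex has suspension homotopy equivalent to the independence complex, which is all the literal statement asks. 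However, your $\tilde C_V\cap\tilde C_W$ lives on the ground set $V\cup W$ and contains faces meeting $W$; it is not the complex $X_{G,V}\subseteq 2^V$ that the paper goes on to use, and the later results (the Alexander dual $Y_D$, the degree shift $|V|-i-2$ in Theorem~\ref{TheoremResumen}, and the white/black chord description of $Y_D$) depend on the ground set being exactly $V$. A small change in your decomposition recovers Jonsson's complex and even simplifies the argument: replace $\tilde C_V$ by the full simplex $2^V$, which is a subcomplex of $I(G)$ because $V$ is independent (bipartiteness again) and is contractible for free, so the nerve lemma is needed only on the $W$-side; the union $2^V\cup\tilde C_W$ is still all of $I(G)$ (a face either lies in $V$ or contains a vertex of $W$), and the intersection $2^V\cap\tilde C_W=\{\sigma\subseteq V:\exists\,w\in W,\ \sigma\cup\{w\}\in I(G)\}$ is precisely the $X_{G,V}$ described after the theorem, so the same gluing lemma yields $I(G)\simeq \mathrm{S}(X_{G,V})$ with the complex the paper actually needs.
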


In \cite{Jonsson} Jonsson also gave the procedure for constructing the complex $X_{G,V}$. Starting with the bipartite graph $G$, a set $\sigma \subseteq V$ belongs to $X_{G,V}$ if and only if there is a vertex $w \in W$ such that $\sigma \bigcup \{w\}$ is an independent set in $G$. In other words, $\sigma \subseteq V$ is a face of $X_{G,V}$ if and only if $\sigma$ is not adjacent to every $w \in W$.

Recall that the Alexander dual of a simplicial complex $X$ with ground set $V$ is a simplicial complex $X^*$ whose faces are the complements of the nonfaces of $X$. The combinatorial Alexander duality (see for example \cite{StanleyAlexn}) relates the homology and cohomology of a given simplicial complex and its Alexander dual:

\begin{theorem} \label{alexanderduality}
Let $X$ be a simplicial complex with a ground set of size $n$. Then the reduced homology of $X$ in degree $i$ is equal to the reduced cohomology of the dual complex $X^*$ in degree $n-i-3$.
\end{theorem}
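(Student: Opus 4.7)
The plan is to build an explicit chain-level isomorphism via the complementation map, routing the computation through the boundary of the full simplex on the ground set. Let $V$ be the ground set of $X$ with $|V|=n$, and write $\Delta$ for the abstract $(n-1)$-simplex on $V$, i.e., the simplicial complex whose faces are all subsets of $V$. Then $X\subseteq\Delta$, and because $\Delta$ is contractible the long exact cohomology sequence of the pair $(\Delta,X)$ collapses to isomorphisms $H^{k}(\Delta,X)\cong\tilde{H}^{k-1}(X)$ for every $k$. This is the first ingredient.

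The second ingredient is a bijection between the bases of $\tilde{C}_i(X^*)$ and of the relative cochain module $C^{n-i-2}(\Delta,X)$. An $i$-face $\sigma$ of $X^*$ is a subset of $V$ with $|\sigma|=i+1$ whose complement $\bar{\sigma}=V\setminus\sigma$ fails to lie in $X$; such a complement has cardinality $n-i-1$ and therefore indexes a basis element of $C^{n-i-2}(\Delta,X)$, the module freely generated by the non-faces of $X$ of dimension $n-i-2$. Hence $\sigma\mapsto\bar{\sigma}$ is a basis bijection.

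Next I would verify that, after a uniform sign twist, this bijection intertwines the simplicial boundary on $\tilde{C}_*(X^*)$ with the relative coboundary on $C^*(\Delta,X)$: removing a vertex $v$ from $\sigma$ amounts to inserting $v$ into $\bar{\sigma}$, so the incident pairs of basis elements appear in both maps with signs that differ only by a factor depending on the position of $v$ in a fixed total order of $V$. This sign bookkeeping is the main obstacle, but it is entirely combinatorial: fix the order, track the standard simplicial signs, and after absorbing the resulting correction into the basis identification one obtains a genuine chain isomorphism $\tilde{C}_i(X^*)\cong C^{n-i-2}(\Delta,X)$.

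Combining the two ingredients yields $\tilde{H}_i(X^*)\cong H^{n-i-2}(\Delta,X)\cong \tilde{H}^{n-i-3}(X)$. Applying this identity with $X^*$ in place of $X$, together with the tautological involution $(X^*)^*=X$ that is immediate from the definition, finishes the proof of the stated equality $\tilde{H}_i(X)\cong\tilde{H}^{n-i-3}(X^*)$.
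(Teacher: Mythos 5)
The paper does not prove this statement at all: it is quoted as the known combinatorial Alexander duality, with a pointer to Stanley's book (and Bj\"orner--Tancer's short proof appears in the bibliography), so there is no internal argument to compare against. Your proposal is a correct reconstruction of exactly that standard chain-level proof: complementation identifies the reduced chains of $X^*$ in degree $i$ with the relative cochains of the pair $(\Delta,X)$ (spanned by duals of non-faces of $X$) in degree $n-i-2$, the signs are repaired by a per-simplex twist, and contractibility of the full simplex $\Delta$ converts $H^{n-i-2}(\Delta,X)$ into $\widetilde{H}^{n-i-3}(X)$; the involution $(X^*)^*=X$ then gives the statement in the direction asserted by the paper. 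Two small points deserve sharper wording. First, the sign correction is not a \emph{uniform} twist: the needed factor is the shuffle sign comparing the position of the inserted vertex $v$ inside $\sigma$ with its position inside $\bar{\sigma}\cup\{v\}$, so it genuinely depends on the basis element; since you do say the correction is absorbed into the basis identification (i.e., $\sigma\mapsto\epsilon(\sigma)\,\bar{\sigma}^{\ast}$ for a suitable sign $\epsilon(\sigma)$), the argument is fine, but ``uniform'' is misleading. Second, for the basis bijection to be complete in the extreme degrees you must use augmented complexes and take $\Delta$ to be the full simplex including its top face $V$: the empty face of $X^*$ (degree $-1$) corresponds to the non-face $V$ (degree $n-1$), and the degenerate cases $X=\Delta$ or $X$ void are exactly where $X^*$ becomes void or all of $\Delta$, which your chain-level argument handles correctly but which is worth stating, since the paper applies the theorem with reduced (co)homology. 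With those clarifications your proof is complete, works over any coefficient ring (being chain-level, it needs no universal coefficients), and supplies a self-contained justification for a result the paper only cites.
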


As Lando graphs are bipartite, these two results together with the fact that a simplicial complex $X$ and its suspension $\mbox{S}(X)$ have the same homology and cohomology with the indices shifted by one, provide an algorithm for computing the cohomology of the independence complex associated to a Lando graph $G_D$ (or equivalently, the extreme Khovanov cohomology of the link represented by $D$) from the homology of an specific simplicial complex.

\begin{theorem}\label{TheoremResumen}
Let $D$ be a diagram of an oriented link $L$ with $n$ negative crossings. Let $j = j_{\min}(D)$. Let $Y_D = (X_{G,V})^*$, where $G = G_D$ is the Lando graph of $D$, with parts $V$ and $W$. Then
$$
H^{i,j}(L) \approx \widetilde{H}_{|V|-i-1-n}(Y_D).
$$
\end{theorem}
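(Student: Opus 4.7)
The plan is essentially a bookkeeping chain that glues together the four results already on the table: Theorem~\ref{KeyTheorem}, the bipartiteness of Lando graphs, Jonsson's Theorem~\ref{jonssongac}, and Alexander duality (Theorem~\ref{alexanderduality}). Each step shifts degrees in a prescribed way, so the only real work is to make sure all the shifts compose correctly to give $|V|-i-1-n$.

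First I would invoke Theorem~\ref{KeyTheorem} to identify the extreme Khovanov cohomology with the Lando cohomology: $H^{i,j}(L) \cong H^{i-1+n}(X_D)$, where $X_D$ is the independence simplicial complex of $G_D$. Next I would note, as already observed in the proof of Corollary~\ref{CorollaryKrs}, that Lando graphs are bipartite, so Theorem~\ref{jonssongac} applies to $G_D$ with the chosen bipartition into parts $V$ and $W$; this gives a homotopy equivalence $X_D \simeq \mathrm{S}(X_{G,V})$, where $\mathrm{S}$ denotes suspension.

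Then I would use the standard fact that suspension shifts reduced (co)homology by one, so
\[
H^{i-1+n}(X_D) \;\cong\; \widetilde{H}^{i-1+n}\bigl(\mathrm{S}(X_{G,V})\bigr) \;\cong\; \widetilde{H}^{i-2+n}(X_{G,V}).
\]
Finally, applying Alexander duality (Theorem~\ref{alexanderduality}) to $X_{G,V}$, whose ground set is $V$ of size $|V|$, converts cohomology in degree $i-2+n$ into reduced homology of the dual $Y_D=(X_{G,V})^*$ in degree $|V|-(i-2+n)-3 = |V|-i-1-n$, which is precisely the statement of the theorem.

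The only delicate point is bookkeeping: one must verify that the suspension shift and the Alexander duality shift are compatible with the convention that the empty simplex has dimension $-1$ and contributes to $\widetilde{H}_{-1}$; if this is handled correctly, no further argument is needed, since the three cited theorems already supply the homotopy equivalence, the duality isomorphism and the initial identification with the Lando complex. Hence the proof reduces to writing out the chain of isomorphisms above and collecting the indices.
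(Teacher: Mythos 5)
Your proposal is correct and follows essentially the same route as the paper: the paper's proof is exactly the chain $H^{i+1-n,j}(L)\approx\widetilde{H}^i(X_D)\approx\widetilde{H}^i(\mathrm{S}(X_{G,V}))\approx\widetilde{H}^{i-1}(X_{G,V})\approx\widetilde{H}_{|V|-i-2}(Y_D)$, i.e.\ Theorem~\ref{KeyTheorem}, Jonsson's Theorem~\ref{jonssongac} (valid since Lando graphs are bipartite), the suspension shift, and combinatorial Alexander duality, with only a different indexing convention. Your index bookkeeping composes to $|V|-i-1-n$ exactly as in the paper, so nothing further is needed.
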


\begin{proof}
Let $Z = X_{G,V}$ hence $Y_D = Z^*$. Then
$$
H^{i+1-n,j}(L)
\approx \widetilde{H}^i(X_D)
\approx \widetilde{H}^i(\mbox{S}(Z))
\approx \widetilde{H}^{i-1}(Z)
\approx \widetilde{H}_{|V|-i-2}(Y_D),
$$

where we have applied Theorem~\ref{KeyTheorem} (recall that $X_D = X_G$ is the independence complex of the Lando graph $G = G_D$), the homotopy equivalence $X_G \approx S(X_{G,V})$ given by Theorem \ref{jonssongac}, the relation between the cohomology of a simplicial complex and its suspension, and finally the combinatorial Alexander duality theorem.
\end{proof}

One can also describe the complex $Y_D$ in terms of $s_AD$, avoiding any reference to the Lando graph $G_D$. Start by coloring the regions of $s_AD$ in a chess fashion. Call an $A$-chord white (black) if it is in a white (black) region. The ground set of $Y_D$ is the set of admissible white arcs of $s_AD$, and a set of admissible white arcs $\sigma$ is a simplex of $Y_D$ if and only if for any admissible black arc there is at least an admissible white arc which is not in $\sigma$ whose ends alternate with the ends of the black arc in the same circle of $s_AD$. Note that there are two different choices when coloring the regions; in order to get the simplest ground set of $Y_D$, choose colors in such a way that white regions contain a lower number of admissible $A$-chords than black regions. 

We are now interested in reversing the process above, namely, starting with any simplicial complex, we will construct a bipartite graph with an associated independence simplicial complex whose cohomology is equal to the homology of the original simplicial complex shifted by some degree. Again, the key point is the Alexander duality theorem together with the following result by Jonsson:

\begin{theorem}\emph{\cite[Theorem 3.2]{Jonsson}}\label{jonsson2}
Let $X$ be a simplicial complex. Then there is a bipartite graph $G$ whose independence complex is homotopically equivalent to the suspension of $X$.
\end{theorem}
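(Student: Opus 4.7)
The plan is to reverse the construction used in Theorem~\ref{jonssongac}. For a bipartite graph $G$ on parts $V,W$, observe that $\sigma\cup\{w\}$ is independent in $G$ exactly when $\sigma\subseteq F_w$, where $F_w := V\setminus N(w)$ denotes the set of vertices in $V$ not adjacent to $w\in W$. Hence
$$
X_{G,V} \;=\; \bigcup_{w\in W}\, 2^{F_w},
$$
that is, $X_{G,V}$ is the simplicial complex on $V$ whose maximal faces lie among the family $\{F_w\}_{w\in W}$. The key observation is that this assignment $G\mapsto X_{G,V}$ is essentially surjective: every simplicial complex on $V$ arises as $X_{G,V}$ for a suitable choice of $G$.

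Concretely, given a simplicial complex $X$ on vertex set $V$ with facets $F_1,\dots,F_m$, introduce $W=\{w_1,\dots,w_m\}$ and define a bipartite graph $G$ on $V\sqcup W$ by declaring $w_i\sim v$ if and only if $v\notin F_i$. Then $V\setminus N(w_i)=F_i$, so the displayed formula gives $X_{G,V}=X$ on the nose. Applying Theorem~\ref{jonssongac} we obtain
$$
\mathrm{Ind}(G)\;\simeq\;S(X_{G,V})\;=\;S(X),
$$
where $\mathrm{Ind}(G)$ denotes the independence complex of $G$, which is precisely what was claimed.

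A couple of small technical points demand attention. Theorem~\ref{jonssongac} requires both parts of $G$ to be nonempty, which is automatic as soon as $X$ has at least one vertex and at least one facet. The corner cases where $X$ is void, or $X=\{\emptyset\}$, can be dealt with by hand (for instance, $G=K_{1,1}$ has independence complex two disjoint points, a model of $S^0\simeq S(\{\emptyset\})$). I do not anticipate any serious obstacle: once one spots the inversion of the map $G\mapsto X_{G,V}$, the proof reduces to a set-theoretic verification together with a single invocation of Theorem~\ref{jonssongac}.
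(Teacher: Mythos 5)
Your graph is exactly the one the paper (following Jonsson) records immediately after Theorem~\ref{jonsson2}: vertices are the ground set $V$ together with the set of maximal faces, with an edge $\{v,\mu\}$ precisely when $v\notin\mu$; and your verification that $X_{G,V}=X$ (using that, $G$ being bipartite, $\sigma\cup\{w\}$ is independent iff $\sigma\subseteq V\setminus N(w)$), followed by a single application of Theorem~\ref{jonssongac}, is a correct and complete derivation. So the proposal is correct and follows essentially the same route as the source; the paper itself only cites Jonsson and states this construction without further argument.
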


The bipartite graph $G$ can be constructed taking as set of vertices the disjoint union of the ground set $V$ of the complex $X$, and the set $M$ of maximal faces of $X$. The edges of $G$ are all pairs $\{v, \mu\}$ such that $v \in V$, $\mu \in M$ and $v \not\in \mu$.

We want to remark that, although Theorem \ref{jonsson2} holds for any simplicial complex, the graph obtained by the above procedure is not necessarily the Lando graph associated to a link diagram. A graph $G$ is said to be realizable if there is a link diagram $D$ such that $G = G_D$ (in \cite{extreme} these graphs were originally called convertible).

We are now ready to construct a link with exactly two non-trivial extreme Khovanov cohomology modules. In the following section this example will be a basic piece to obtain interesting families of $H$-thick knots.


\begin{theorem}\label{Thexample}
There exist oriented link diagrams whose extreme Khovanov cohomology modules are non-trivial for two different values of $i$, that is, $H^{i, j_{\min}}(D)$ is non-trivial for two different values of $i$.
\end{theorem}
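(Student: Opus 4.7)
The plan is to combine Theorem \ref{KeyTheorem} with Jonsson's reverse construction (Theorem \ref{jonsson2}) so that the whole problem reduces to a question about simplicial complexes. By Theorem \ref{KeyTheorem}, it is enough to produce an oriented link diagram $D$ whose Lando graph $G_D$ has independence complex $X_D$ with reduced cohomology $\widetilde{H}^{*}(X_D)$ non-trivial in at least two distinct degrees; equivalently, using Theorem \ref{TheoremResumen}, such that the Alexander dual $Y_D$ has reduced homology in two different degrees.

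First I would choose a finite simplicial complex $X$ whose reduced homology is supported in two distinct degrees. The cleanest option is $X$ homotopy equivalent to $\{pt\} \sqcup S^1$, realized concretely as a triangle disjoint from an isolated vertex, so that $\widetilde{H}_0(X) \approx \widetilde{H}_1(X) \approx R$. Then I would apply the reverse Jonsson construction described immediately after Theorem \ref{jonsson2}: the associated bipartite graph $G$ on $V \sqcup M$ (vertices of $X$ together with its maximal faces, with edges $\{v,\mu\}$ whenever $v \notin \mu$) satisfies $X_G \approx \Sigma X$ by Theorem \ref{jonsson2}, hence its independence complex has non-trivial reduced cohomology in the two degrees one larger than those where $X$ has non-trivial reduced homology.

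The principal obstacle is realizability: an arbitrary bipartite graph need not be the Lando graph of any link diagram, as emphasized in the paragraph following Theorem \ref{jonsson2}. To address this, I would use the direct geometric description of $G_D$ in terms of $s_AD$ from Section \ref{LandoSection}: attempt to draw a family of planar circles joined by admissible $A$-chords so that the chord-alternation pattern on the circles reproduces the vertices and edges of $G$. If this succeeds, a link diagram $D$ with $G_D = G$ is recovered by replacing each admissible chord by a crossing and connecting everything into a planar link diagram. If the first attempt fails, I would substitute $X$ by a homotopy-equivalent complex with the same two-degree cohomology pattern (for instance by subdividing faces, wedging on contractible pieces, or taking a longer cycle in place of $S^1$) until a realizable $G$ appears.

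Finally, once a realizable $D$ is produced, Theorem \ref{KeyTheorem} identifies $\widetilde{H}^{*}(X_D)$ with the extreme Khovanov cohomology of $D$ up to the shift $n-1$, and the two-degree support of $\widetilde{H}^{*}(X_D)$ translates into non-triviality of $H^{i, j_{\min}}(D)$ at two distinct values of $i$. The main technical difficulty throughout is the realizability step, since the Jonsson graph carries no planarity or chord-alternation information \emph{a priori}; finding (or engineering) an $X$ whose associated $G$ is convertible is where the construction genuinely needs work.
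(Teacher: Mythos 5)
Your overall strategy --- feed a complex with two-degree homology into Jonsson's construction (Theorem~\ref{jonsson2}), realize the resulting bipartite graph as a Lando graph, and conclude via Theorem~\ref{KeyTheorem} --- is the same as the paper's, but there is a genuine gap at precisely the step you yourself flag as the principal obstacle: realizability is never established, and since the theorem is an existence statement, producing an explicit realizable graph together with a diagram $D$ realizing it \emph{is} the content of the proof. Saying that if the first attempt fails you will ``substitute $X$ by a homotopy-equivalent complex until a realizable $G$ appears'' is not an argument: you give no criterion for realizability, no guarantee that such a modification exists, and no diagram. Worse, your chosen first attempt provably fails: the point-plus-triangle complex is its own Alexander dual, so the graph your direct application of Jonsson's construction produces is exactly the eight-vertex graph made of two hexagons with common consecutive edges that the Remark following the proof of Theorem~\ref{Thexample} states is \emph{not} realizable as a Lando graph. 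So your fallback clause is doing all the work, and it is only a hope.

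Compare with how the paper engineers realizability. It starts with the point-plus-square complex $X$ on five vertices (not the triangle), passes to the combinatorial Alexander dual $X^*$ --- a step you omit, and the one that changes which graph comes out of Jonsson's construction --- and obtains the graph $G$ consisting of two hexagons sharing a single vertex. It then exhibits an explicit $11$-crossing oriented diagram $D$ with $G_D=G$ (Figures~\ref{teoej2} and~\ref{teoej3}), and only then invokes Corollary~\ref{CorollaryJmin} and Theorem~\ref{KeyTheorem} to get $H^{0,1}(L)\approx H^{1,1}(L)\approx\mathbb{Z}$, with all other extreme groups trivial (also noting that in degrees $\neq 0$ reduced and unreduced cohomology of $X_G$ agree). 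To repair your proposal you would have to do the analogous concrete work: choose the input complex (and, if needed, its dual) so that the output graph can actually be drawn as circles with admissible $A$-chords whose alternation pattern reproduces $G$, and then exhibit the link diagram obtained by reversing the smoothings.
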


\begin{proof}
Although our argument is equally valid for any commutative ring $R$ with unit, just for convenience set $R = \mathbb{Z}$, the ring of integers.

Let $X =\{ \emptyset, 1, 2, 3, 4, 5, 12, 23, 34, 41 \}$ be a simplicial complex with ground set $V=\{1, 2, 3,  4, 5\}$. Its topological realization is the disjoint union of a point and a square, as shown in Figure~\ref{teoej1}, hence its reduced homology is $\widetilde{H}_0(X) \approx \widetilde{H}_1(X) \approx \mathbb{Z}$ (the other homology groups being trivial).\\

Consider now the Alexander dual of $X$, $X^{*} =  \left\{ \emptyset, 1, 2, 3, 4, 5, 12, 13, 14, 15, 23, 24, \right.$ $\left. 25, 34, 35, 45, 123, 124, 134, 135, 234, 245 \right\}$. Note that $|X| + |X^{*}| = 32 = 2^{|V|}$. Applying the combinatorial Alexander duality
leads to
$$
\widetilde{H}_{i}(X) \approx \widetilde{H}^{|V|-i-3}(X^*)=\widetilde{H}^{2-i}(X^*),
$$

which implies that $\widetilde{H}^2(X^*) \approx \widetilde{H}^1(X^*) \approx \mathbb{Z}$, the other groups being trivial.

\begin{figure}
\centering
\includegraphics[width = 11cm]{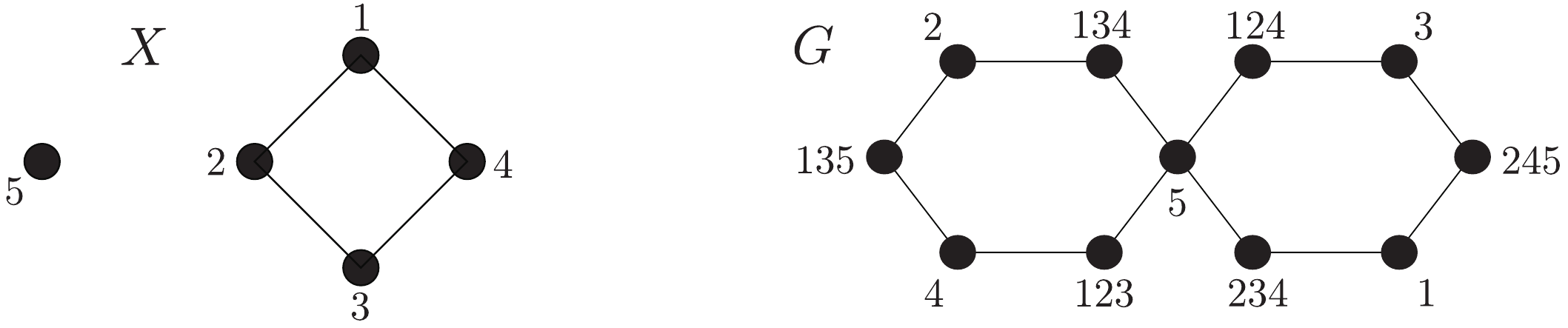}
\caption{\small{The topological realization of the simplicial complex $X$ and the graph $G$. The independence complex of $G$ is homotopy equivalent to the suspension of $X^*$.}}
\label{teoej1}
\end{figure}

Applying Theorem \ref{jonsson2} (and the construction described right after) to the simplicial complex $X^*$ leads to a graph $G$ consisting in two hexagons sharing a common vertex as shown in Figure \ref{teoej1}, whose independence complex $X_G$ is homotopically equivalent to the suspension of $X^*$. In particular,

$$
\widetilde{H}^{i-1} (X^*) \approx \widetilde{H}^i (S(X^*)) \approx \widetilde{H}^i (X_G).
$$

Hence, $\widetilde{H}^3(X_G) \approx \widetilde{H}^2 (X_G) \approx \mathbb{Z}$ are the only non-trivial groups in the reduced cohomology of $X_G$. In fact, as the indices are different from zero, this is still true for the (non-reduced) cohomology, so $H^2(X_G) \approx H^3(X_G) \approx \mathbb{Z}$.

An important point now is the fact that the graph $G$ is realizable. In fact, $G = G_D$ with $D$ being the link diagram in Figure \ref{teoej3}. Indeed, Figure \ref{teoej2} shows the correspondence between the vertices of $G$ and the $A$-chords in $s_AD$.

\begin{figure}
\centering
\includegraphics[width = 11.5cm]{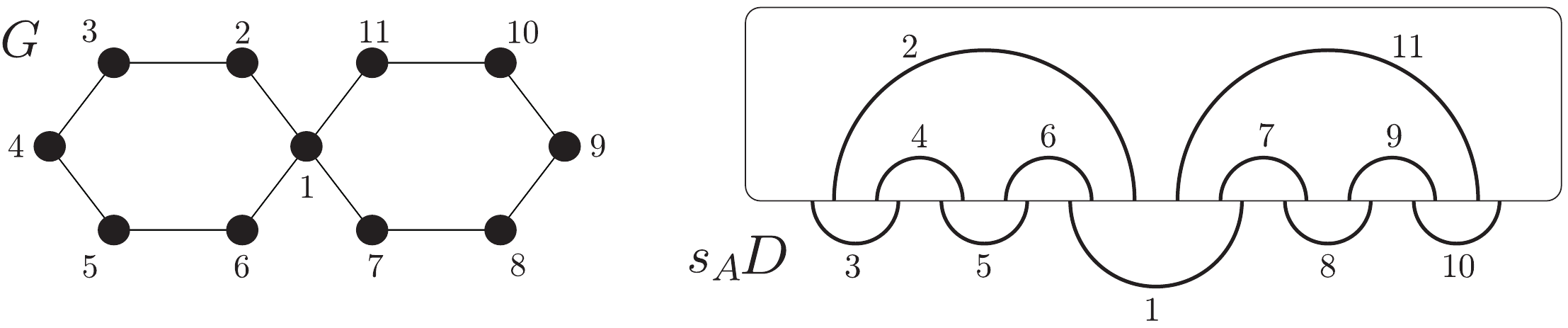}
\caption{\small{The graph $G$ is realizable, the correspondence between its vertices and the $A$-chords in $s_AD$ is shown.}}
\label{teoej2}
\end{figure}

\begin{figure}
\centering
\includegraphics[width = 10cm]{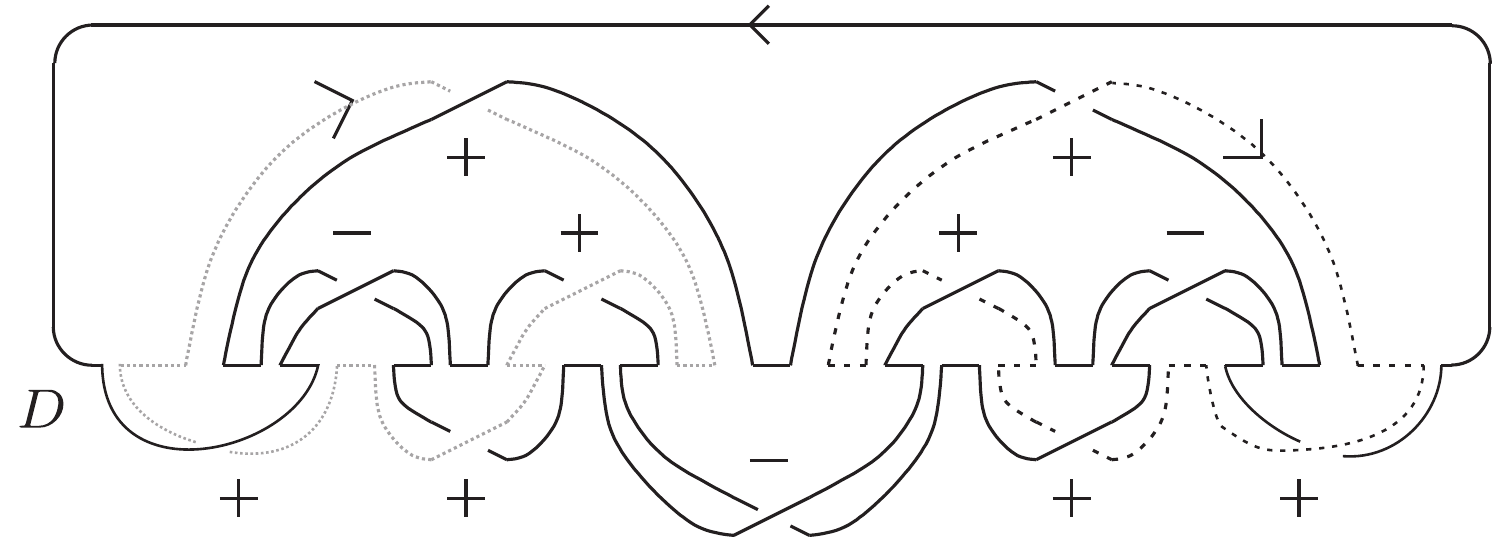}
\caption{\small{The oriented diagram $D$ representing the link $L$.}}
\label{teoej3}
\end{figure}

Consider now the link $L$ represented by the diagram $D$ oriented as shown in Figure~\ref{teoej3}. Then applying Corollary \ref{CorollaryJmin} we get that $j_{\min} = c - 3n - |s_A| = 11 - 3 \cdot 3 - 1 = 1$, and by Theorem~\ref{KeyTheorem} one gets $H^{i,1}(L) \approx \mathbb{Z}$ for $i=0,1$, the other cohomology groups being trivial. This concludes the proof.

\end{proof}

For the example in the previous proof we have checked with computer assistance that the ranks of the chain groups $C^{i} = C^{i}(X_D)$ and differentials $\delta_i$ are
$$1(1)11(10)43(33)73(39)52(12)13(1)1,$$

where the rank of the differentials are parenthesized. This means that the Lando ascendant complex is
$$
0 \, \longrightarrow \, C^{-1} \, \stackrel{\delta_{-1}}{\longrightarrow} \, C^{0} \, \stackrel{\delta_{0}}{\longrightarrow} \, C^{1} \, \stackrel{\delta_{1}}{\longrightarrow} \, C^{2} \, \stackrel{\delta_{2}}{\longrightarrow} \, C^{3} \, \stackrel{\delta_{3}}{\longrightarrow} \, C^{4}
\, \stackrel{\delta_{4}}{\longrightarrow} \, C^{5} \, \stackrel{\delta_{5}}{\longrightarrow} \, 0,
$$

with rk$(C^{-1}) = 1$, rk$(\delta_{-1}) = 1$, rk$(C^{0}) = 11$, rk$(\delta_{0}) = 10$ and so on. Hence
$$
\textnormal{rk}(H^{2}(X_D))=73-39-33=1 \quad \textnormal{ and } \quad \textnormal{rk}(H^{3}(X_D))=52-12-39=1.
$$

\begin{remark}
The proof of Theorem \ref{Thexample} does not work if, for example, we start with the simplicial complex whose topological realization is a point plus a triangle. Although one gets again a graph $G$ such that $X_G$ has two non-trivial cohomology groups, $G$ consists of two hexagons with four common consecutive edges (a total of eight vertices), which is no longer a realizable graph.
\end{remark}


\section{Families of $H$-thick knots}\label{ThickKnotsSection}

Citing Khovanov \cite{patterns}, there are $249$ prime unoriented knots with at most $10$ crossings (not counting mirror images). It is known that for all but $12$ of these knots the non-trivial Khovanov cohomology groups lie on two adjacent diagonals, in a matrix where rows are indexed by $j$ and columns by $i$. Such knots are called $H$-thin. An $H$-thick knot is a knot which is not $H$-thin. For example, any non-split alternating link is $H$-thin, and in the opposite direction, any adequate non-alternating knot is $H$-thick (see \cite{patterns}, Theorem~2.1 and Proposition~5.1).

Up to eleven crossings, there are no knots with more that one non-trivial cohomology group in the rows corresponding to the hypothetical extreme $j_{\max}$ or $j_{\min}$ obtained from the associated diagrams in \cite{atlas}. There are examples which seem to contradict this statement. For example knot $10_{132}$, whose Khovanov cohomology groups are trivial for $j>-1$ and has two non-trivial groups for $j=-1$, but for the diagram of $10_{132}$ taken from \cite{atlas} $j_{\max}(D) = -c + 3p + |s_B| = -10 + 3 \cdot 3 + 2 = 1$. We do not know if there exists a diagram $D$ representing the knot $10_{132}$ with $j_{\max}(D) = -1$.

In this section we show examples of $H$-thick knots having any arbitrary number of non-trivial cohomology groups in the non-trivial row of smallest possible index. More precisely, we will provide a diagram $D$ whose row indexed by $j_{\min}(D)$ is non-trivial, and hence corresponds to the non-trivial row of smallest possible index. Moreover, this row has as many non-trivial cohomology groups as desired. The basic piece in our construction is the link given in the proof of Theorem \ref{Thexample}.

We want to remark that Theorem \ref{KeyTheorem} allows us to compute the extreme Khovanov cohomology of any link diagram $D$ by considering independently each of the circles appearing in $s_AD$, as the non-admissible $A-$chords do not take part in the construction of the simplicial complex $Y_D$ described in Section \ref{grafoacomplejosection}. More precisely, let $D$ be a link diagram and $c_1, \ldots c_n$ the circles of $s_AD$. Write $C_i$ for the circle $c_i$ together with the admissible $A-$chords having both ends in the circle $c_i$, and let $D_i$ be the diagram reconstructed from $C_i$ by reversing the corresponding smoothings. Then, from the construction right after Theorem \ref{TheoremResumen} it follows that $Y_D = Y_{D_1} \ast \ldots \ast Y_{D_n}$, with $\ast$ being the join of simplicial complexes. Recall that the join $X \ast Y$ of two simplicial complexes $X$ and $Y$ is defined as the simplicial complex whose simplices are the disjoint unions of simplices of $X$ and $Y$.

The reduced homology of the join of two simplicial complexes can be computed directly from the reduced homology of each of the complexes, namely
$$
\widetilde{H}_{i}(X \ast Y) = \displaystyle\sum_{r+s=i-1} \widetilde{H}_r(X) \otimes \widetilde{H}_s(Y) \oplus \displaystyle\sum_{r+s=i-2} \textnormal{Tor}(\widetilde{H}_r(X),\widetilde{H}_s(Y)).
$$

Taking copies of the example in the proof of Theorem \ref{Thexample} one obtains a link which, by Theorem \ref{TheoremResumen} and the above formula, has any number of non-trivial extreme Khovanov cohomology groups. Note that, although one could also use the general formula for the Khovanov cohomology of a split link (\cite[Corollary 12]{Khovanov}), we think that our techniques are more useful in order to make computations. Even more, our understanding of extreme Khovanov cohomology in terms of Lando cohomology allows us to slightly modify a link in such a way that one obtains a knot with the same extreme Khovanov cohomology. We explain this construction in detail in Theorem \ref{teothick} and right after it. We need first the following result:

\begin{proposition}\label{ResultCopias}
Let $*_nX$ be the join of $n$ copies of the simplicial complex $X=\{ \emptyset, 1,2,3,4,5,12,23,34,41\}$. Then $\widetilde{H}_i(*_nX)\approx \mathbb{Z}^{\binom{n}{i-n+1}}$ if $n-1 \leq i \leq 2n-1$, and it is trivial otherwise.
\end{proposition}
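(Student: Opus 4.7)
My plan is to proceed by induction on $n$, exploiting the fact that the reduced homology of $X$ itself is free, so that the Tor summands in the join formula quoted just before the statement will all vanish.

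For the base case $n=1$, the space $*_1 X = X$ is the topological disjoint union of a point and the boundary of a square, so $\widetilde{H}_0(X) \approx \mathbb{Z}$ and $\widetilde{H}_1(X) \approx \mathbb{Z}$, with all other groups trivial. The claimed formula gives $\widetilde{H}_i(*_1 X) \approx \mathbb{Z}^{\binom{1}{i}}$ for $0 \leq i \leq 1$, which matches.

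For the inductive step, I write $*_{n+1} X = X \ast (*_n X)$ and apply the displayed Künneth-type formula for joins. The inductive hypothesis says that $\widetilde{H}_s(*_n X)$ is free abelian for every $s$, and $\widetilde{H}_r(X)$ is free for every $r$; therefore every Tor term vanishes, and
\[
\widetilde{H}_i(*_{n+1} X) \;\approx\; \bigoplus_{r+s=i-1} \widetilde{H}_r(X)\otimes \widetilde{H}_s(*_n X).
\]
Since only $r=0$ and $r=1$ contribute, the right hand side equals
\[
\widetilde{H}_{i-1}(*_n X) \,\oplus\, \widetilde{H}_{i-2}(*_n X) \;\approx\; \mathbb{Z}^{\binom{n}{i-n}} \oplus \mathbb{Z}^{\binom{n}{i-n-1}},
\]
and Pascal's identity $\binom{n}{k}+\binom{n}{k-1}=\binom{n+1}{k}$ turns the exponent into $\binom{n+1}{i-n} = \binom{n+1}{i-(n+1)+1}$, which is exactly what the statement predicts for $n+1$.

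The only real bookkeeping is to check the range endpoints. The two summands are nonzero respectively when $n-1 \leq i-1 \leq 2n-1$ and $n-1 \leq i-2 \leq 2n-1$, so the union of the supports is $n \leq i \leq 2n+1$, which is exactly the range $(n+1)-1 \leq i \leq 2(n+1)-1$ required. At the extremes $i=n$ and $i=2n+1$ only one of the two summands contributes, giving $\binom{n}{0}=\binom{n+1}{0}=1$ and $\binom{n}{n}=\binom{n+1}{n+1}=1$ respectively, so the formula still holds. I do not anticipate any conceptual obstacle; the only step that requires care is lining up the shift $s \mapsto i-1-r$ in the join formula with the shift $i \mapsto i-n+1$ in the binomial index, and the combinatorial identity that collapses the two terms into a single binomial coefficient.
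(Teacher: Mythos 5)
Your proposal is correct and follows essentially the same route as the paper: induction on $n$, the same base case (point plus square boundary giving $\widetilde{H}_0\approx\widetilde{H}_1\approx\mathbb{Z}$), the join formula with vanishing Tor terms, and Pascal's identity to collapse the two binomial coefficients. Your explicit justification that Tor vanishes (freeness via the inductive hypothesis) and your check of the range endpoints are slightly more detailed than the paper's, but the argument is the same.
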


\begin{proof}
By induction on $n$. In the proof of Theorem~\ref{Thexample} we saw that $\widetilde{H}_0(X)\approx \widetilde{H}_1(X)\approx \mathbb{Z}$, which is the case $n=1$. For $n>1$ we apply the formula for the homology of a join (torsion terms do not appear in any case):
$$
\begin{array}{rcl}
\widetilde{H}_i(*_nX) &  \approx & \bigoplus_{r+s=i-1}\left[ \widetilde{H}_r(*_{n-1}X)\otimes \widetilde{H}_s(X)\right] \\ &&\\
&  \approx & \widetilde{H}_{i-1}(*_{n-1}X) \bigoplus \widetilde{H}_{i-2}(*_{n-1}X)\\ &&\\
 & \approx &
\mathbb{Z}^{\binom{n-1}{(i-1)-(n-1)+1}} \bigoplus \mathbb{Z}^{\binom{n-1}{(i-2)-(n-1)+1}} \\ &&\\
 & \approx &
\mathbb{Z}^{\binom{n-1}{i-n+1}} \bigoplus \mathbb{Z}^{\binom{n-1}{i-n}} \\
&&\\
 & \approx &
\mathbb{Z}^{\binom{n}{i-n+1}}.
\end{array}
$$
\end{proof}

\begin{theorem}\label{teothick}
For every $n > 0$ there exists an oriented knot diagram $D$ with exactly $n+1$ non-trivial extreme Khovanov integer cohomology groups $H^{i, j_{\min}}(D)$.
\end{theorem}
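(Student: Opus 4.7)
The plan is to construct the knot diagram $D_n$ in two steps: first build a split link of $n$ copies of the example from Theorem~\ref{Thexample}, then merge the components into a single one through a local move that leaves the Lando graph unchanged.

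\textbf{Split link.} Let $D$ denote the link diagram built in the proof of Theorem~\ref{Thexample} and let $D^{\sqcup n}$ be the diagram obtained by placing $n$ disjoint copies of $D$ in the plane. Then $s_AD^{\sqcup n}$ is a disjoint union of $n$ copies of $s_AD$, so the Lando graph $G_{D^{\sqcup n}}$ is the disjoint union of $n$ copies of $G_D$. The join decomposition of $Y_D$ over the circles of $s_A$, recalled right after Theorem~\ref{TheoremResumen}, gives $Y_{D^{\sqcup n}} = Y_D^{\ast n}$. The computation inside the proof of Theorem~\ref{Thexample} implicitly shows that $Y_D$ has free reduced homology of rank one in two consecutive degrees, so the Tor terms in the join-homology formula vanish and the inductive argument of Proposition~\ref{ResultCopias} applies verbatim: $\widetilde{H}_{\ast}(Y_{D^{\sqcup n}})$ is non-trivial in exactly $n+1$ consecutive degrees, with ranks $\binom{n}{0}, \binom{n}{1}, \dots, \binom{n}{n}$. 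Theorem~\ref{TheoremResumen} then produces exactly $n+1$ non-trivial extreme Khovanov cohomology modules of $D^{\sqcup n}$.

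\textbf{Merging into a knot.} The remaining task is to turn $D^{\sqcup n}$ into a knot diagram $D_n$ without changing this count. I would exhibit one small local move that simultaneously merges two adjacent link components into one, merges the two corresponding circles of $s_A$ into a single circle, and introduces only non-admissible new $A$-chords, with no new interleavings among the pre-existing admissible chords. These conditions guarantee that the Lando graph of the modified diagram is still the disjoint union of $n$ copies of $G_D$, so $Y_{D_n}$ coincides with $Y_{D^{\sqcup n}}$ and the cohomology count from the first step is preserved. Iterating this move $n-1$ times reduces the link to a single component, producing the desired knot diagram $D_n$. The precise value of $j_{\min}(D_n)$ and the index shift in $i$ follow from Corollary~\ref{CorollaryJmin} and Theorem~\ref{KeyTheorem} applied to the updated crossing and circle counts.

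\textbf{Main obstacle.} The crux of the argument is the design of the merging move. A generic band sum between two components perturbs the $A$-smoothing around the modification and tends to create new admissible chords or new interleavings, which would enlarge the Lando graph and break the cohomology count. One needs a careful diagrammatic construction --- plausibly a small band sum routed through a face of $s_A$ adjacent to two distinct circles, so that its new crossings produce chords whose endpoints lie in two different post-merge circles of $s_A$. Once such a prototype move is exhibited, iterating it $n-1$ times between consecutive copies of $D$ completes the construction.
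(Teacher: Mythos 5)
Your first step (the split union of $n$ copies of the diagram from Theorem~\ref{Thexample}, the join decomposition $Y_{D^{\sqcup n}} = Y_D^{\ast n}$, the vanishing of the Tor terms, and the count of $n+1$ non-trivial groups via Proposition~\ref{ResultCopias} and Theorem~\ref{TheoremResumen}) is exactly the paper's argument and is complete. The gap is in the second step: you never exhibit the merging move, and you yourself flag it as the ``main obstacle.'' Since the theorem asserts the existence of a \emph{knot} diagram, this step is not a technicality that can be waved at --- without an explicit move the proof is not finished.

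Moreover, the desiderata you list for the move are in tension with each other, and your suggested realization (a band sum) points the wrong way. If the move really ``merges the two corresponding circles of $s_A$ into a single circle,'' then the $A$-chords created by the new crossings will typically have both endpoints on that merged circle and hence be \emph{admissible}; even a single new isolated vertex in the Lando graph cones off the independence complex and destroys the cohomology count, and chords that were non-admissible (one end on each old circle) would become admissible as well. The paper's solution avoids this precisely by \emph{not} merging the circles of $s_A$: one merges link components by a clasp of two crossings (or, in the construction actually used, by modifying each copy $D$ to a one-component diagram $D'$ with four extra crossings and then chaining the $n$ copies), chosen so that the $A$-smoothing of the new crossings restores the original circle structure (possibly adding new circles), so every new $A$-chord joins two \emph{different} circles and is non-admissible. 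Then the Lando graph of the resulting knot diagram is literally the same as that of $D^{\sqcup n}$, only $5n-1$ non-admissible chords having been added, and the count of $n+1$ non-trivial groups carries over. To repair your proof you should replace the band-sum sketch by such an explicit clasp-type modification and verify, on the level of $s_A$, that no new admissible chord and no new interleaving is created.
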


\begin{proof}
Let $L$ be the oriented link represented by the diagram $D$ in Figure \ref{teoej3}. Considering as ground set the chords in the unbounded region of $s_AD$, the associated simplicial complex $Y_D$ is the simplicial complex $X$ appearing in the proof of Theorem \ref{Thexample}, whose topological realization is the disjoint union of a point and a square (Figure \ref{teoej1}). Hence it has two non-trivial reduced homology groups, $\widetilde{H}_0(Y_D) \approx \widetilde{H}_1(Y_D) \approx \mathbb{Z}$.

Now consider the link $L_n$ consisting of the split union of $n$ copies of $L$. It can be represented by $D_n$, the disjoint union of $n$ copies of $D$, so $s_AD_n$ is the disjoint union of $n$ copies of $s_AD$, shown in Figure $\ref{teoej2}$. Hence its associated simplicial complex is $Y_{D_n} = \ast_n Y_{D} = *_nX$, that is, the join of $n$ copies of $X$. Applying Proposition \ref{ResultCopias} to $*_nX$ one gets
$$\widetilde{H}_i(Y_{D_n})  \approx  \mathbb{Z}^{\binom{n}{i-n+1}}$$
for $n-1 \leq i \leq 2n-1$.

This fact together with Theorem \ref{TheoremResumen} shows that the extreme Khovanov cohomology of $L_n$ has $n+1$ non-trivial groups.

Now we will give a knot having the same extreme Khovanov cohomology groups as $L_n$ (the value of $j_{\min}$ changes in general). Starting from the diagram $D$ in Figure \ref{teoej3}, add four crossings, as shown in Figure \ref{trucofi1}, in such a way that the resulting diagram $D'$ has one component. Note that $s_AD'$ is obtained from $s_AD$ by adding two circles with four $A$-chords. Consider now $n$ copies of $D'$ and join them as shown in Figure $\ref{trucofi2}$. The resulting diagram $D_n'$ is a knot diagram. Since $s_AD_n'$ just adds $5n-1$ non-admissible $A$-chords to $s_AD_n$, both diagrams $D_n$ and $D_n'$ share the same Lando graph. Hence $D_n'$ represents a knot having $n+1$ non-trivial groups in its extreme Khovanov cohomology.
\end{proof}

\begin{figure}
\centering
\includegraphics[width = 12cm]{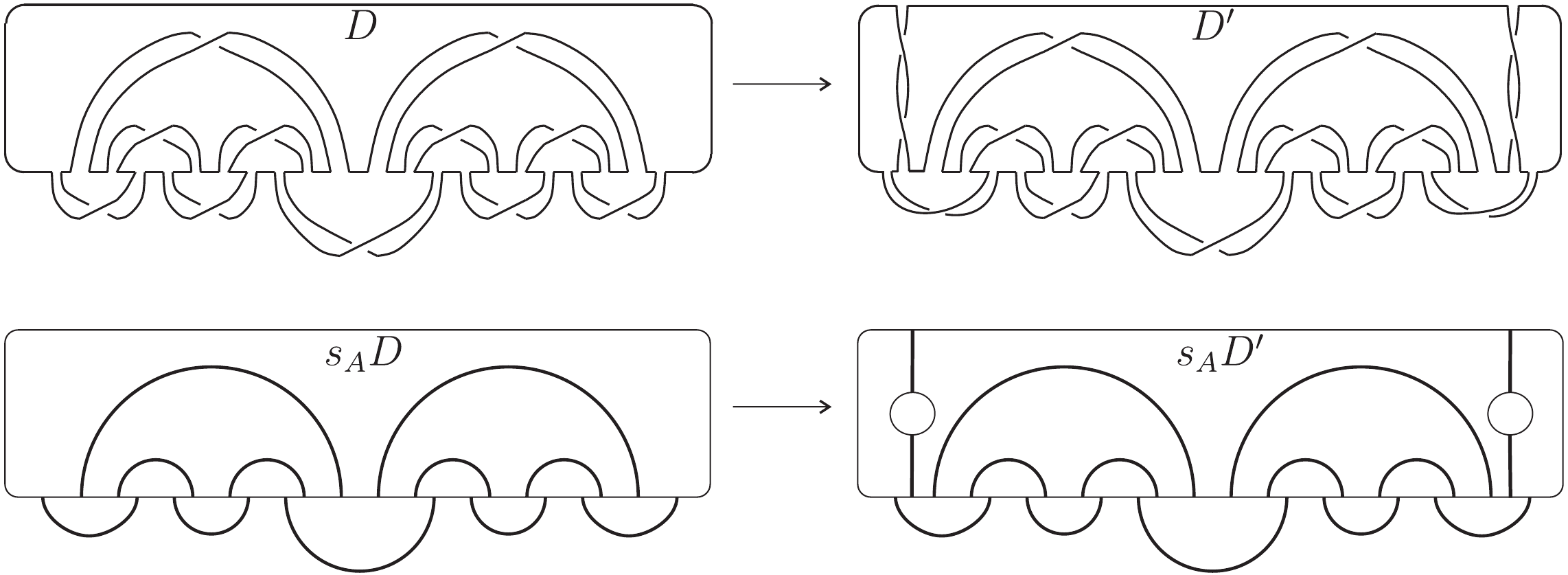}
\caption{\small{The first row shows $D$ and $D'$. The corresponding $s_AD$ and $s_AD'$ are shown in the second row. Note that $D'$ has one component.}}
\label{trucofi1}
\end{figure}

\begin{figure}
\centering
\includegraphics[width = 12.5cm]{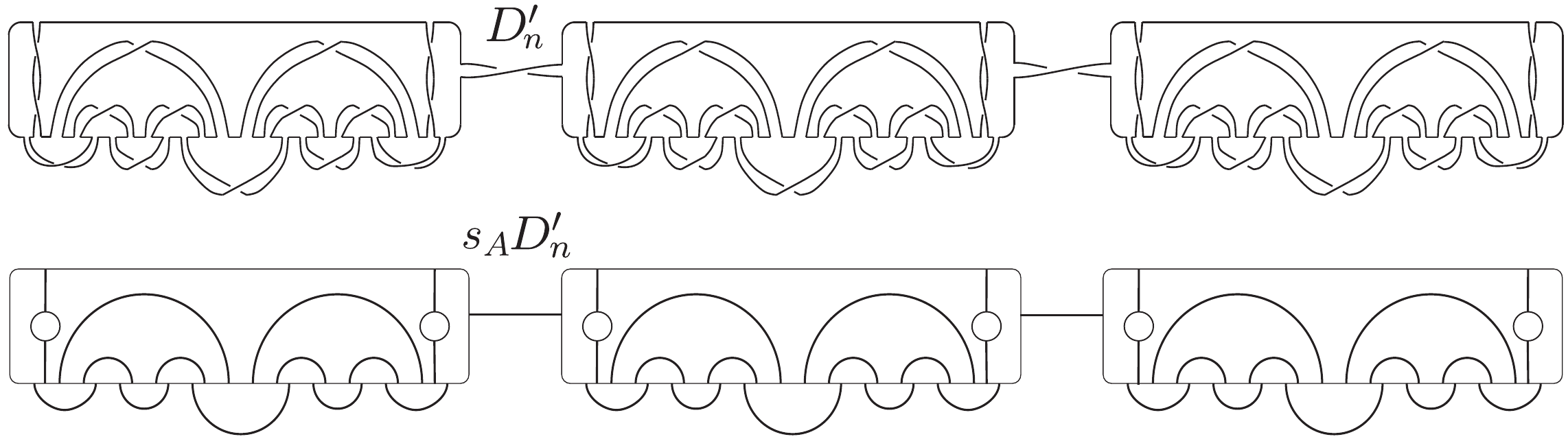}
\caption{\small{$D_n'$ and $s_AD_n'$ are shown for the case $n=3$.}}
\label{trucofi2}
\end{figure}

The proof of Theorem \ref{teothick} provides a family of knots having as many non-trivial extreme Khovanov cohomology groups as desired. These are examples of $H$-thick knots as far of being $H$-thin as desired.

\begin{remark}
Every link $L$ with $\mu$ components can be turned into a knot preserving its extreme Khovanov cohomology ($j_{min}$ can change). One just needs to consider a diagram $D$ of $L$ and add two extra crossings melting two different components into one, as shown in Figure \ref{trucofinal}. Since $s_AD'$ just adds two non-admissible $A$-chords to $s_AD$, both diagrams share the same Lando graph. After repeating this procedure $\mu - 1$ times, the link $L$ is transformed into a knot.
\end{remark}

\begin{figure}
\centering
\includegraphics[width = 12.5cm]{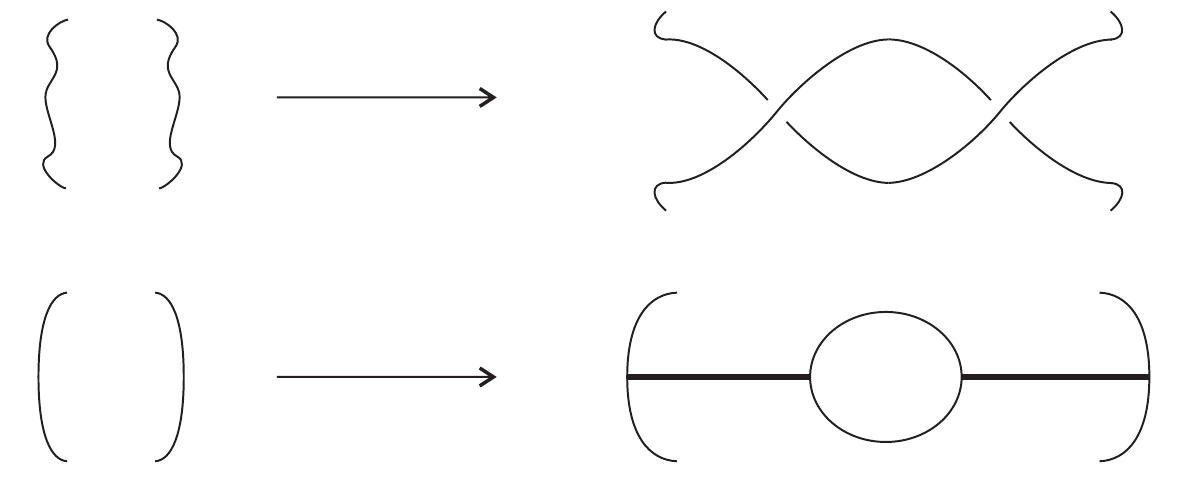}
\caption{\small{This transformation reduces the number of components of the link by one, and the extreme Khovanov cohomology is preserved.}}
\label{trucofinal}
\end{figure}

\end{document}